\documentclass[6pt]{article}
\usepackage[english]{babel}

\usepackage{amsmath, amsthm}
\usepackage{amscd}
\usepackage{amsfonts}
\usepackage{amssymb}
\usepackage{url}
\usepackage[toc,page]{appendix}

\title{Compact fibrations with hyperk\"ahler fibers}
\author{Rodion N. D\'eev\footnote{The article was prepared within the framework of the Academic Fund Program at the National Research University Higher School of Economics (HSE) in 2015 (grant 15-05-0033) and supported within the framework of a subsidy granted to the HSE by the Government of the Russian Federation for the implementation of the Global Competitiveness Program.}}

\newcommand{\arr}{\xrightarrow}
\DeclareMathOperator{\Diff}{Diff}
\newcommand{\Z}{\mathbb{Z}}
\newcommand{\bC}{\mathbb C}
\newcommand{\R}{\mathbb{R}}

\newcommand{\bH}{\mathbb{H}}
\newcommand{\fX}{\mathfrak{X}}
\newcommand{\cI}{\mathcal{I}}
\newcommand{\fI}{\mathfrak{I}}

\DeclareMathOperator{\Gr}{Gr}
\DeclareMathOperator{\GL}{GL}
\DeclareMathOperator{\Sp}{Sp}
\DeclareMathOperator{\SO}{SO}
\DeclareMathOperator{\rU}{U}
\newcommand{\bP}{\mathbb{P}}
\DeclareMathOperator{\Teich}{Teich}
\DeclareMathOperator{\Per}{Per}
\DeclareMathOperator{\Hom}{Hom}
\newcommand{\per}{per}
\DeclareMathOperator{\SU}{SU}
\DeclareMathOperator{\Id}{Id}
\DeclareMathOperator{\Cau}{Cau}
\DeclareMathOperator{\Tw}{Tw}

\newtheorem{fact}{Proposition}[section]
\newtheorem{thm}{Fact}[section]
\newtheorem*{thrm}{Theorem}
\newtheorem*{lm}{Lemma}
\theoremstyle{definition}
\newtheorem{defn}{Definition}[section]

\begin{document}
\maketitle
\begin{abstract}
Essential dimension of a family of complex manifolds is the dimension of the image of its base in the Kuranishi space of the fiber. We prove that any family of hyperk\"ahler manifolds over a compact simply connected base has essential dimension not greater than $1$. A similar result about families of complex tori is also obtained.
\end{abstract}
\tableofcontents

\newpage
\section{Introduction}
Let $\bH$ be the skew-field of quaternions.

\begin{defn}A manifold $M$ with a left $\bH$-action in the tangent bundle is called {\it almost hypercomplex}. Any quaternion $q$ with $q^2=-1$ defines an almost complex structure on $M$, and if all these complex structures are integrable, $M$ is called {\it hypercomplex}.
\end{defn}

One does not need to check the integrability condition for all points $q$ in the 2-sphere $q^2=-1$, but only for three quaternions spanning the space of imaginary ones. Any hypercomplex structure admits a unique torsion-free connection preserving the hypercomplex structure, called {\it Obata connection} \cite{Ka, Ob}.

\begin{defn}A Riemannian hypercomplex manifold $(M,g)$ is called {\it hyperk\"ahler}, if its Obata and Levi-Civita connections are equal. 
\end{defn}

One can think of hyperk\"ahler manifolds as of Riemannian manifolds with three integrable almost complex structures $I$, $J$, $K$ such that $IJ = -JI = K$ which are K\"ahler with respect to the Riemannian metric, or as of Riemannian manifolds with holonomy in the group $\Sp(n)$. When $n=1$, $\Sp(1) = \SU(2)$ and all the hyperk\"ahler manifolds are either K3 surfaces or tori.

One can wonder what are the families of hyperk\"ahler manifolds, i.~e. submersions of complex manifolds with hyperk\"ahler fibers. This question is ultimately closely related to the variations of Hodge structures, because any family of complex manifolds gives rise to a variation of Hodge structures. Some complex manifolds are completely determined by their Hodge structures; statements of such type are known as ``Torelli theorems''. Although the global Torelli theorem for hyperk\"ahler manifolds was proved in~2009 \cite{V1}, Torelli theorem for K3 surfaces has much longer history: its local version was established in~1964 by G.~N.~Tjurina \cite{T}, whereas its global version was obtained in~1977 by Vik.~S.~Kulikov \cite{Ku} (earlier advances in this topic include a 1971 paper by I.~I.~Pyatetski-Shapiro and I.~R.~Shafarevich \cite{PShSh}; a 1978 paper \cite{Bo1} of Bogomolov is also worthy to mention). Because of that study of the families of complex manifolds was reduced to study of variations of Hodge structures since olden times. The first advance in this direction is due to Deligne and Griffiths.

\begin{thm}[P.~R.~Deligne, Ph.~A.~Griffiths, 1970]Any variation of Hodge structures over a compact simply connected base is trivial.
\end{thm}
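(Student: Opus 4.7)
My plan is to reduce the statement to a rigidity result for horizontal holomorphic maps into the period domain. Since $B$ is simply connected, the flat $\mathbb Z$-local system underlying the variation is globally trivializable: parallel transport from a fixed basepoint identifies each fibre with a single polarized lattice $V$. Under this trivialization the Hodge filtration $F^\bullet$ becomes, pointwise, a filtration of $V\otimes\bC$ of the prescribed Hodge type, and so defines a classifying period map $\phi\colon B\to D$, where $D\subset \check D$ is the Griffiths period domain, an open $G_\R$-orbit inside the compact dual flag variety $\check D=G_\bC/P$. The holomorphicity of the Hodge bundles makes $\phi$ holomorphic, and Griffiths transversality $\nabla F^p\subset F^{p-1}\otimes\Omega^1_B$ forces $d\phi$ to take values in the horizontal tangent subbundle $T_h D\subset TD$.

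The problem is thus to show that any horizontal holomorphic map from a compact complex base to $D$ is constant. Here I would invoke Griffiths' curvature computation: the $G_\R$-invariant Hermitian metric on $D$ has holomorphic sectional curvature bounded above by a strictly negative constant along horizontal directions. Equipping $B$ with an auxiliary Hermitian metric $\omega$ and applying a Chern--Lu or Royden-type Schwarz lemma to the smooth function $e(\phi)=\mathrm{tr}_\omega\phi^\ast h$, one finds that at a maximum point the curvature inequality is incompatible with $d\phi\neq 0$, forcing $\phi$ to be constant. Once $\phi$ is constant, the Hodge filtration is the same subspace of $V\otimes\bC$ at every point of $B$, i.e.\ the variation is trivial in the obvious sense.

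The main obstacle is the last step, the Schwarz-lemma argument: Griffiths' negative-curvature statement for the horizontal directions is standard, but closing the maximum-principle bound cleanly when $B$ is only assumed compact complex (not K\"ahler) requires absorbing torsion terms, and one typically has to choose a Gauduchon metric on $B$ and exploit that the target-side negativity dominates any source-side error. An alternative which avoids metric analysis altogether is to note that $\phi(B)\subset \check D$ is a compact analytic subset contained in $D$, and invoke the fact that $D$ admits no positive-dimensional compact horizontal analytic subvarieties, which is itself a consequence of the same curvature estimate.
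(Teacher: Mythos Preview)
The paper does not give its own proof of this statement: it is quoted as a known result of Deligne and Griffiths, with a pointer to the Theorem of the Fixed Part in \cite[Ch.~II, Application~7]{G}, and nothing more. So there is no in-paper argument to compare against.

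That said, your sketch is essentially the classical Griffiths argument and is correct in outline. Trivializing the underlying local system over a simply connected base to obtain a genuine holomorphic horizontal map $\phi\colon B\to D\subset\check D$, and then invoking the strict negativity of the holomorphic sectional curvature of the Hodge metric in horizontal directions to force $\phi$ constant, is exactly how this is done. Your caution about the Schwarz--Lu step when $B$ is merely compact complex rather than K\"ahler is well placed; the cleaner route you mention at the end---that $D$ contains no positive-dimensional compact horizontal analytic subvarieties---sidesteps that issue entirely and is the formulation usually cited.

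It is worth noting, for contrast, that the paper's \emph{own} results (Propositions~3.4 and~3.5) are proved by a different mechanism: rather than curvature estimates on the full Griffiths domain, the author exploits the special geometry of the hyperk\"ahler period space $\Gr_{++}(\R^{3,b_2-3})$, slicing it by the Stein ``Cauchy divisors'' $\Cau_v=\Gr_{++}(v^\perp)$ to bound the dimension of compact subvarieties. That argument does not need horizontality or curvature, but it is specific to this particular period domain and yields only $\dim\le 1$ rather than constancy---which is precisely the point of the paper.
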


Of course, Griffiths has proved much stronger statement, called ``Theorem of the Fixed Part'' \cite[Ch.~II, Application~7]{G}, but we shall not need it in its full generality. It follows from this fact that any polarized family of hyperk\"ahler manifolds over a compact simply connected base is trivial. The following definition asserts that one cannot drop out the polarization condition in the Deligne's and Griffiths' statement.

\begin{defn}If $X$ is a hypercomplex (for example, hyperk\"ahler) manifold, then any imaginary unit quaternion $q$ defines a complex structure on $X$. This gives rise to a nontrivial almost complex structure on the space $X \times \bC P^1$, where we identify $\bC P^1$ with the unit sphere in the space of imaginary quaternions: namely, tangent space at point $(x,q)$ splits as $T_xX \oplus T_q\bC P^1$, and one can put the complex structure $q$ on $T_xX$ and the standard one on $T_q\bC P^1$. 

\begin{thrm}[M.~Obata, 1953 \cite{Ob}, S.~Salamon, 1982 \cite{S}, D.~Kaledin, 1996 \cite{Ka}] This almost complex structure is integrable.
\end{thrm}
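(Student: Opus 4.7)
The plan is to invoke the Newlander--Nirenberg theorem and to show that the Nijenhuis tensor
\[
N_{\fI}(U,V) = [U,V] - [\fI U, \fI V] + \fI[\fI U, V] + \fI[U, \fI V]
\]
vanishes identically on $X \times \bC P^1$. Because of the product structure, the tangent bundle splits canonically into pieces tangent to $X$ and to $\bC P^1$, and since $N_\fI$ is tensorial in its arguments it is enough to verify vanishing on three types of pairs of vector fields.

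If $U,V$ are both lifted from $\bC P^1$, then $\fI$ restricts to them as the standard complex structure on $\bC P^1$, whose integrability is classical. If $U,V$ are both lifted from $X$ (i.e.\ constant in the $q$-variable), then fiberwise at each $q \in S^2$ the structure $\fI$ equals the complex structure $q$ on $X$, whose integrability is precisely the defining hypercomplex condition; the Nijenhuis tensor decouples accordingly.

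The main case is the mixed one, where $U = u(x)$ is lifted from $X$ and $V = v(q)$ is tangent to $\bC P^1$. In local coordinates the brackets $[U,V]$ and $[U, \fI V]$ vanish because the coordinate components do not cross-depend. The remaining two terms both arise from differentiating $(\fI U)|_{(x,q)} = q\cdot u(x)$ along $\bC P^1$-directions; these derivatives produce $V_q\cdot u$ and $\widetilde V_q \cdot u$ respectively, where $V_q \in T_q S^2 \subset \operatorname{Im}\bH$ is $V$ viewed as an imaginary quaternion and $\widetilde V_q$ is its image under the standard complex structure on $S^2$. The calculation hinges on the identification of the standard complex structure on $S^2 \subset \operatorname{Im}\bH$ as left quaternionic multiplication by $q$, i.e.\ $\widetilde V_q = q V_q$; then associativity of the left $\bH$-action on $TX$ yields $\widetilde V_q \cdot u = q(V_q \cdot u) = \fI(V_q \cdot u)$, so the two mixed contributions cancel exactly, giving $N_\fI(U,V)=0$.

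The main obstacle I expect is precisely that last identification: one must fix orientations and conventions carefully to realize the standard complex structure on $\bC P^1 = S^2 \subset \operatorname{Im}\bH$ as left multiplication by $q$, and match this with the given left $\bH$-action in the tangent bundle of $X$. Once the conventions are aligned, the cancellation is immediate from associativity of $\bH$, and no input beyond the pointwise integrability of the complex structures $q$ on $X$ is needed --- in particular the full Obata connection is not required for this statement, though it would give an alternative, coordinate-free derivation of the same cancellation.
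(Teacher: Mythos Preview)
Your sketch is correct and is essentially the Nijenhuis--tensor computation carried out in Kaledin's note \cite{Ka}: split vector fields into horizontal and vertical parts, observe that the purely vertical and purely horizontal pieces vanish by integrability of $\bC P^1$ and of each $q$ respectively, and reduce the mixed piece to the identity that the standard complex structure on $S^2\subset\operatorname{Im}\bH$ at $q$ is left multiplication by $q$, after which associativity of the $\bH$-action finishes the job. Your caveat about orientations is the only genuine point of care, and you have identified it correctly.

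There is, however, nothing in the present paper to compare your argument against. The theorem is quoted inside the definition of the twistor space purely as a cited fact, with the references \cite{Ob,S,Ka} in lieu of a proof; the paper never returns to it. So your proposal supplies a proof where the paper deliberately has none.
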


The product $X \times \bC P^1$ with this complex structure is denoted by $\Tw(X)$ and called the {\it twistor space}.
\end{defn}

The projection $\Tw(X)\to\bC P^1$ is holomorphic and defines a nontrivial family of hyperk\"ahler manifolds. It is not hard to prove that the twistor space cannot bear any K\"ahler form \cite[Proposition 2.13]{Hi,V3}. The concept of twistors has appeared in works of R.~Penrose in the late 1960ies \cite{P}, and has been developed by M.~F.~Atiyah, N.~J.~Hitchin and I.~M.~Singer \cite{AHS} (in context of 4-dimensional Riemannian geometry) and S.~Salamon \cite{S} (in context of quaternionic K\"ahler geometry).

Seeking for a way to generalize a well-known fact about isotriviality of the complete families of elliptic curves, R.~E.~Borcherds, L.~Katzarkov, T.~Pantev and N.~I.~Shepherd-Barron proved in 1997 the following theorems, which allow to drop out the condition of simply connectedness in Griffiths' statement at least for fibrations with fibers K3 surfaces (but need some extra restrictions on the fibration).

\begin{thm}[R.~E.~Borcherds, L.~Katzarkov, T.~Pantev and N.~I.~Shepherd-Barron, 1997, \cite{BKPShB}]Any complete family of minimal K\"ahler surfaces of Kodaira dimension $0$ and constant Picard number is isotrivial.
\end{thm}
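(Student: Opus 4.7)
My plan is to combine the Enriques--Kodaira classification with a rigidity statement for period maps of polarised variations of Hodge structures into arithmetic quotients of bounded symmetric domains.

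First, by the Enriques--Kodaira classification, every minimal K\"ahler surface of Kodaira dimension zero is a K3 surface, an Enriques surface (doubly covered by a K3), a complex $2$-torus, or a bielliptic surface (finite \'etale covered by a product of elliptic curves). Replacing the total space of $f\colon X\to S$ by an appropriate fibrewise finite \'etale cover reduces the problem to families whose fibres are all K3 surfaces or all complex $2$-tori; isotriviality descends from the cover to the original family.

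Second, I would form the transcendental variation of Hodge structures. The constancy of the Picard number implies that the N\'eron--Severi groups assemble into a local sub-system $\mathrm{NS}\subset R^2f_*\Z$ of locally constant rank $\rho$; its orthogonal complement $\mathbb T\subset R^2f_*\mathbb Q$ is a sub-local-system of rank $22-\rho$ carrying a polarised variation of Hodge structures of weight $2$. The polarisation is automatic: the intersection form, restricted to the transcendental part, is positive definite on $(H^{2,0}\oplus H^{0,2})_\R$, so no ambient K\"ahler class need be chosen. For tori one analogously extracts a polarised weight-$1$ sub-VHS from $R^1f_*\mathbb Q$.

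Third, the period map $\psi\colon S\to \Gamma\backslash D$ associated with $\mathbb T$ sends $S$ into the arithmetic quotient of a bounded symmetric domain $D$ --- of type IV in the K3 case and of Siegel type III in the torus case. Because $S$ is complete, $\psi(S)$ is a compact analytic subvariety of the quasi-projective variety $\Gamma\backslash D$. The heart of the proof, and the main obstacle, is to show that $\psi$ must be constant: simple connectedness of $S$ is \emph{not} assumed, so the Deligne--Griffiths fixed-part theorem cannot be applied directly. To overcome this, I would invoke the specific geometry of $\Gamma\backslash D$ --- its Baily--Borel compactification and the existence of Borcherds's automorphic forms with prescribed zero loci along Noether--Lefschetz divisors --- to show, via curvature or hyperbolicity, that no positive-dimensional compact analytic subvariety of $\Gamma\backslash D$ can arise as the image of such a period map. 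Once $\psi$ is constant the transcendental Hodge structure is fibrewise constant, and together with the locally constant N\'eron--Severi part the full Hodge structure on $H^2$ (or $H^1$) is constant along $S$. The global Torelli theorem for K3 surfaces (Pyatetski-Shapiro--Shafarevich, Kulikov) and its classical analogue for complex tori then yield isotriviality.
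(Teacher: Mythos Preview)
Your outline is essentially the original Borcherds--Katzarkov--Pantev--Shepherd-Barron argument, and in broad strokes it is correct: reduce via Enriques--Kodaira to K3 and tori, observe that constant Picard number makes the transcendental lattice a polarised sub-VHS whose period map lands in $\Gamma\backslash D$ and \emph{avoids every Noether--Lefschetz divisor}, and then use the Borcherds form $\Phi_{12}$---whose divisor is supported exactly on those loci---to pull back a nowhere-vanishing section of an ample line bundle to the compact base, forcing the period map to be constant. One caution: your phrase ``no positive-dimensional compact analytic subvariety of $\Gamma\backslash D$ can arise'' is false as stated (compact Shimura subvarieties exist); the genuine mechanism is precisely the avoidance of $\mathrm{div}(\Phi_{12})$, which you should make explicit. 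Also, for the torus case there is no Borcherds form on the Siegel domain; BKPShB pass through the associated Kummer K3, so your Siegel-type sketch needs adjusting.

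The paper, however, takes a different and more elementary route. It first proves Oguiso's statement (Fact~1.4): for any non-trivial family of hyperk\"ahler manifolds over a disc the Picard-jumping locus is dense. The argument is pure linear algebra in the period space $\Per=\Gr_{++}(H^2(X,\R))$: the integral Cauchy divisors $\Cau_v$ (for positive $v\in H^2(X,\Z)$) are dense among all Cauchy divisors because rational lines are dense in the positive cone, so any embedded holomorphic disc in $\Per$ meets their union in a dense set, and these intersections are exactly where the Picard rank jumps. Fact~1.2 then drops out by contraposition: a family with constant Picard number is trivial over every disc, hence isotrivial. This avoids automorphic forms and the arithmetic of the K3 lattice altogether; in particular it works uniformly for simple hyperk\"ahler manifolds of any dimension, whereas the $\Phi_{12}$ argument is tied to the embedding of the K3 lattice into $\mathrm{II}_{2,26}$ and, as the paper notes, has no known analogue once $b_2>28$.
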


\begin{thm}[R.~E.~Borcherds, L.~Katzarkov, T.~Pantev and N.~I.~Shepherd-Barron, 1997, \cite{BKPShB}]Any family  of smooth polarized K3 surfaces with polarization of degree 2 over a projective variety is isotrivial.
\end{thm}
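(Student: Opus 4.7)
The plan is to exploit the explicit geometric realization of degree-$2$ polarized K3 surfaces: for every such $(X,L)$ one has $h^0(X,L)=3$, and $|L|$ realizes $X$ as a $2{:}1$ cover of $\bP^2$ branched along a smooth sextic curve. Given a polarized family $\pi\colon \mathcal{X}\to B$, I would first package this description in families: the direct image $\pi_*L$ is a rank-$3$ vector bundle on $B$, producing a relative $\bP^2$-bundle $\bP(\pi_*L)\to B$, and $\mathcal{X}$ is a relative double cover of this bundle branched along a family $\mathcal{C}\subset\bP(\pi_*L)$ of plane sextics.

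Next, I would pass to a finite \'etale cover $B'\to B$ that trivializes the Brauer class of $\bP(\pi_*L)$; after this cover the family is encoded in a morphism $\phi\colon B'\to|\mathcal{O}_{\bP^2}(6)|/\!/\mathrm{PGL}_3$ to the GIT quotient parameterizing plane sextics up to projective equivalence. Since the original polarized family is reconstructed from $\phi$ as a relative double cover, isotriviality over $B$ (up to the finite cover, which it suffices to treat) reduces to showing that $\phi$ is constant.

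The main obstacle is exactly this last step, since $B'$ need not be simply connected and so Deligne--Griffiths cannot be invoked directly. To address it I would consider the polarized variation of Hodge structures of weight $2$ induced by $\pi$ on the primitive second cohomology, and the associated period map $\psi\colon B'\to \mathcal{D}/\Gamma$ to the $19$-dimensional locally symmetric variety classifying such structures. By local Torelli, $\psi$ is an immersion wherever nonconstant, so $\psi(B')$ would have to be a projective subvariety of the quasi-projective $\mathcal{D}/\Gamma$ tangent to the horizontal distribution. Combining semi-positivity of the Hodge bundle (\`a la Griffiths--Fujita--Kawamata) with the negative curvature in horizontal directions on $\mathcal{D}$ should yield a curvature inequality forcing $\psi(B')$ to be a point. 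The delicate part is to rule out images lying inside compact sub-Shimura loci of $\mathcal{D}/\Gamma$ of orthogonal type, which is where the specific degree-$2$ geometry --- via the sextic realization and the fact that the branch curve is a complete intersection datum on a $\bP^2$-bundle, not merely a Hodge-theoretic one --- must be brought in to exclude such sporadic projective subvarieties.
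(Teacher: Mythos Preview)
First, a point of context: the paper does \emph{not} give its own proof of this statement. It cites it as Fact~1.3 from \cite{BKPShB} and explicitly remarks that ``the Fact~1.3 cannot be proved with'' the paper's period-space techniques. The paper's methods recover Facts~1.2 and~1.4 only. So there is no proof in the paper to compare your proposal against; the relevant comparison is with the original argument of Borcherds--Katzarkov--Pantev--Shepherd-Barron.

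On the substance of your proposal: steps 1--5 (the double-cover-of-$\bP^2$ description, passing to a family of sextics, reducing to constancy of a map to a GIT quotient) are standard and fine. The genuine gap is in your step 7--8. The period domain for degree-$2$ polarized K3 surfaces is a Hermitian symmetric domain of type $\SO(2,19)$, and its arithmetic quotient $\mathcal{D}/\Gamma$ is a quasi-projective Shimura variety. General curvature inequalities of Griffiths--Schmid type tell you the period map is distance-decreasing for suitable metrics, but they do \emph{not} by themselves preclude the image being a positive-dimensional compact subvariety of $\mathcal{D}/\Gamma$: quotients of bounded symmetric domains by arithmetic groups can and do contain compact subvarieties. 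Your final sentence acknowledges this (``rule out images lying inside compact sub-Shimura loci''), but the mechanism you propose --- that the sextic-branch-curve realization is ``not merely a Hodge-theoretic'' datum --- is not an argument. By the Torelli theorem the polarized K3 is determined by its Hodge structure, so the sextic data carries no extra leverage here; whatever excludes compact subvarieties must be visible on $\mathcal{D}/\Gamma$ itself.

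The actual proof in \cite{BKPShB}, as the paper recalls, hinges on Borcherds' automorphic form $\Phi_{12}$ on the symmetric space of $\mathrm{O}(\mathrm{II}_{2,26})$: it furnishes a section of an ample line bundle on $\mathcal{D}/\Gamma$ vanishing only along the Heegner (nodal) divisor, so a compact subvariety avoiding that divisor would carry a nowhere-zero section of an ample bundle, a contradiction. This is the missing ingredient your outline would need to supply, and it is genuinely arithmetic/automorphic rather than differential-geometric --- which is exactly why the paper singles out Fact~1.3 as beyond its methods.
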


They dealt separately with the cases of hyperelliptic, Enriques and K3 fiber, and the latter was the essential one. Their technique heavily uses the theory of automorphic forms, namely the properties of the Borcherds' automorphic form $\Phi_{12}$ on the Cartan symmetric space for the group $\mathrm{O}(\mathrm{II}_{2,26})$, and one cannot prove similar results for fibrations with hyperk\"ahler fibers of dimension greater than 28 in the same way. It seems that the only generalization of their result to arbitrary hyperk\"ahler manifolds is the following theorem of K.~Oguiso.

\begin{thm}[K.~Oguiso, 2000, \cite{Og}]Let $\fX\to\Delta$ be a nontrivial family of hyperk\"ahler manifolds over a disc. Then the set of points where the Picard number of the fiber jumps is a dense countable subset of a disc.
\end{thm}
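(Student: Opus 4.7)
\medskip

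\noindent\textbf{Proof proposal.} Since $\Delta$ is simply connected, the local system $R^2\pi_*\Z$ is trivial and we identify $H^2(X_t,\Z)$ with a fixed lattice $\Lambda$ endowed with the Beauville--Bogomolov--Fujiki form $q$. The family induces a holomorphic period map
\[
\phi\colon\Delta\longrightarrow\Omega:=\bigl\{[\sigma]\in\bP(\Lambda\otimes\bC): q(\sigma,\sigma)=0,\ q(\sigma,\bar\sigma)>0\bigr\},\qquad t\mapsto H^{2,0}(X_t).
\]
Local Torelli for hyperk\"ahler manifolds (Bogomolov--Beauville) identifies the Kuranishi space of $X_t$ with a neighborhood of $\phi(t)$ in $\Omega$; combined with the global Torelli theorem and simple connectedness of $\Delta$, non-triviality of the family forces $\phi$ to be non-constant.

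For $\alpha\in\Lambda$ set $\Omega_\alpha:=\{[\sigma]\in\Omega:q(\sigma,\alpha)=0\}$; then $\phi(t)\in\Omega_\alpha$ if and only if $\alpha\in\mathrm{NS}(X_t)$. Let $L:=\{\alpha\in\Lambda:\phi(\Delta)\subset\Omega_\alpha\}$, a primitive sublattice whose $\Z$-rank is the generic Picard number $\rho_{\mathrm{gen}}$. The Picard number jumps at $t$ precisely when $\phi(t)\in\Omega_\alpha$ for some $\alpha\in\Lambda\setminus L$, so the jumping locus equals
\[
J=\bigcup_{\alpha\in\Lambda\setminus L}\phi^{-1}(\Omega_\alpha).
\]
For every $\alpha\notin L$ the holomorphic function $t\mapsto q(\tilde\phi(t),\alpha)$ (with $\tilde\phi$ any local holomorphic lift of $\phi$ to $\Lambda\otimes\bC$) is not identically zero, so its zero set in $\Delta$ is discrete; hence $J$ is a countable union of discrete sets, in particular countable.

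For density, fix a nonempty open $U\subset\Delta$ and a point $t_1\in U$ at which $\phi$ is a local immersion (an open dense condition by non-constancy). Non-constancy also gives $\dim_\bC\mathrm{span}_\bC\tilde\phi(\Delta)\ge 2$, whence $\mathrm{rk}\,L\le\mathrm{rk}\,\Lambda-2$, and so the real codimension-$2$ subspace $V_{t_1}:=\{\alpha\in\Lambda\otimes\R:q(\tilde\phi(t_1),\alpha)=0\}$ strictly contains $L\otimes\R$. Since $\tilde\phi(t_1)$ and $\tilde\phi'(t_1)$ are $\bC$-linearly independent, we may choose $\alpha\in V_{t_1}\setminus(L\otimes\R)$ with $q(\tilde\phi'(t_1),\alpha)\neq 0$, and approximate it by integral classes $\alpha_n\in\Lambda\setminus L$ (which remain outside the closed set $L\otimes\R$). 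The holomorphic functions $t\mapsto q(\tilde\phi(t),\alpha_n)$ converge uniformly on compacts to $t\mapsto q(\tilde\phi(t),\alpha)$, which has a simple zero at $t_1$; by Hurwitz's theorem each $q(\tilde\phi(\cdot),\alpha_n)$ has a zero $t_n\in U$ for $n$ large, and thus $J\cap U\neq\emptyset$.

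The countability bookkeeping is a routine one-variable holomorphic argument once local Torelli has been invoked. The main obstacle is the density assertion: it requires combining non-constancy of $\phi$ (through global Torelli and contractibility of $\Delta$) with a Diophantine approximation of a vector in $V_{t_1}\setminus(L\otimes\R)$ by integral classes transverse to $\tilde\phi$, followed by an application of Hurwitz to propagate zeros into the prescribed neighborhood.
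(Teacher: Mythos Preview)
Your proof is correct and follows essentially the same route as the paper's. Both arguments fix a point of the disc, choose a real class $\alpha$ whose orthogonal hyperplane passes through the period of that point, approximate $\alpha$ by integral classes, and then use stability of the intersection under this perturbation. The paper packages this geometrically in the positive Grassmannian $\Gr_{++}(V)$: the hyperplanes $\Omega_\alpha$ appear as Cauchy divisors $\Cau_v$ for positive $v$, the approximation step is ``positive integral directions are dense in the positive cone'', and stability is phrased as ``a nearby integral Cauchy divisor still meets the disc'' via tangent hyperplanes in $\bP T^*\Per$. You instead work directly with the scalar holomorphic function $t\mapsto q(\tilde\phi(t),\alpha)$ and invoke Hurwitz's theorem explicitly, which is cleaner on a one-dimensional base and makes the transversality hypothesis (simple zero) visible. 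You also handle countability, which the paper leaves implicit.

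Two small imprecisions worth tightening. First, the inequality $\mathrm{rk}\,L\le\mathrm{rk}\,\Lambda-2$ only yields $\dim_\R L\otimes\R\le\dim_\R V_{t_1}$, not the strict containment you assert; the strictness actually follows because $L\otimes\R=V_{t_1}$ would give $(L\otimes\R)^\perp=W_{t_1}$, while every period plane $W_t$ lies in $(L\otimes\R)^\perp$, forcing $W_t=W_{t_1}$ for all $t$ and hence $\phi$ constant. Second, $\Lambda$ itself is discrete in $\Lambda\otimes\R$, so ``approximate $\alpha$ by integral classes'' should be read as approximating the \emph{line} $\R\alpha$ by rational lines and then clearing denominators; this is harmless since the zero set of $q(\tilde\phi(\cdot),\alpha)$ depends only on $\R\alpha$, and after normalising one still has the uniform convergence needed for Hurwitz.
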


In this paper we give alternative proofs of the Facts 1.2 and 1.4 via newly developed techniques (whereas the Fact 1.3 cannot be proved with them).

The global Torelli theorem is known for curves, tori, hyperk\"ahler manifolds and some exotic cases such as cubic threefolds \cite[Ch.~XII~and~XIII]{G}. Curves and threefolds are automatically polarized, so we are not interested in them. Recall that the {\it essential dimension} of a smooth complex fibration is the dimension of the base in the Kuranishi space of the fiber.  The main theorem we are going to prove is the following.

\begin{thrm}Let $\fX\to B$ be a smooth fibration of compact complex manifolds with smooth hyperk\"ahler fibers and simply connected base. Then its essential dimension is no greater than one.
\end{thrm}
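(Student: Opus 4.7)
The plan is to reduce the theorem to a geometric statement about compact analytic subvarieties of the period domain, and then prove that statement by a maximum-principle argument using the indefinite Beauville-Bogomolov-Fujiki form.

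First, since $B$ is simply connected, the local system $R^2\pi_*\Z$ is canonically trivial, and I fix an identification $H^2(X_b,\Z)\cong\Lambda$ for all $b\in B$, where the Beauville-Bogomolov-Fujiki form $q$ on $\Lambda$ has signature $(3,b_2-3)$. The variation of Hodge structures then yields a holomorphic period map
\[
\Per:\ B\longrightarrow \Omega:=\bigl\{[\sigma]\in\bP(\Lambda\otimes\bC)\ :\ q(\sigma,\sigma)=0,\ q(\sigma,\bar\sigma)>0\bigr\}.
\]
The local Torelli theorem for hyperk\"ahler manifolds asserts that the period map from the Kuranishi space to $\Omega$ is a local biholomorphism (both have complex dimension $h^{1,1}=b_2-2$), so the essential dimension of the family equals $\dim_{\bC}\Per(B)$. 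Because $B$ is compact, $\Per(B)\subset\Omega$ is a compact analytic subvariety, and the whole statement is reduced to the geometric claim that any compact analytic subvariety of $\Omega$ has complex dimension at most one.

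To prove this, I fix an auxiliary positive-definite Hermitian inner product $\langle\cdot,\cdot\rangle$ on $\Lambda\otimes\bC$ with corresponding Fubini-Study form $\omega_{FS}$, and introduce
\[
f([\sigma])=\frac{q(\sigma,\bar\sigma)}{\langle\sigma,\sigma\rangle},\qquad \omega_q:=i\partial\bar\partial\log q(\sigma,\bar\sigma),
\]
both of which descend well to the appropriate open subset of $\bP(\Lambda\otimes\bC)$, with $i\partial\bar\partial\log f=\omega_q-\omega_{FS}$ on $\Omega$. Setting $h(v,w):=q(v,\bar w)$, a short linear-algebra computation identifies $\omega_q$ on $T_{[\sigma_0]}\Omega=T_{[\sigma_0]}Q$ (where $Q$ is the quadric $\{q(\sigma,\sigma)=0\}$) with $h/h(\sigma_0,\sigma_0)$ restricted to $\sigma_0^{\perp_q}\cap\sigma_0^{\perp_h}$, and shows that this restriction has signature $(1,b_2-3)$. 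The point is that both $\sigma_0$ and $\bar\sigma_0$ are $h$-positive and mutually $h$-orthogonal, so passing from the signature $(3,b_2-3)$ of $h$ on $\Lambda\otimes\bC$ to its restriction on the tangent to the quadric kills exactly two positive directions; only one positive direction of $\omega_q$ survives on $T_{[\sigma_0]}Q$.

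Given a compact analytic subvariety $V\subset\Omega$, which I may replace by a resolution and thus assume smooth, let $p\in V$ be a point where $\log f|_V$ attains its minimum. The inequality $i\partial\bar\partial(\log f)|_{T_pV}\geq 0$ at a local minimum gives $\omega_q|_{T_pV}\geq\omega_{FS}|_{T_pV}$; the right-hand side is positive definite, so $\omega_q|_{T_pV}$ is positive definite. Since $\omega_q$ admits only one positive direction on $T_pQ\supset T_pV$, this forces $\dim_{\bC}T_pV\leq 1$, hence $\dim V\leq 1$, completing the argument. I expect the delicate point of the proof to be exactly the signature computation: one must verify the $h$-orthogonality of $\bar\sigma_0$ to $\sigma_0^{\perp_q}\cap\sigma_0^{\perp_h}$ in order for Sylvester's law to yield the sharp signature $(1,b_2-3)$ rather than $(2,b_2-4)$, which is what converts the weaker bound $\dim V\leq 2$ into the optimal bound $\dim V\leq 1$.
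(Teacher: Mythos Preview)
Your reduction---via local Torelli and the properness of $\Per$---to the assertion that $\Omega$ contains no compact analytic subvariety of dimension $\geq 2$ matches the paper's, and your signature computation is correct: since $h(\sigma_0,\bar\sigma_0)=q(\sigma_0,\sigma_0)=0$, the vectors $\sigma_0,\bar\sigma_0$ span an $h$-positive, $h$-orthogonal $2$-plane and the complement carries $h$ with signature $(1,b_2-3)$. The actual gap is in the maximum-principle step. After replacing $V$ by a resolution $\tilde V\xrightarrow{\pi}\Omega$, what you write as $\omega_{FS}|_{T_pV}$ is the pullback $\pi^*\omega_{FS}$, which is only positive \emph{semi}-definite where $d\pi$ drops rank. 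Nothing forces the minimum of $\pi^*\log f$ off the exceptional locus; if the minimum of $f|_V$ happens to be a singular point of $V$, the entire exceptional fibre over it consists of minima of $\pi^*\log f$, and your inequality yields no bound on the number of $\omega_q$-positive directions. This is repairable---for instance, argue with the tangent \emph{cone} of the original $V$ at a singular minimum (its projectivization then sits inside the complex-hyperbolic ball $\{[v]:h(v,v)>0\}$, a bounded domain containing no positive-dimensional compact subvariety), or invoke Chow and Bertini to cut $V$ down to a smooth compact surface before running the argument---but as written the clause ``the right-hand side is positive definite'' is unjustified. This, not the signature issue you flagged, is the delicate point.

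The paper's own route to the same geometric statement is different and shorter. It slices $\Omega$ by the divisors $\Cau_v=\Gr_{++}(v^\perp)$ for positive $v$; each $\Cau_v\cong\SO(2,b_2-3)/\SO(2)\times\SO(b_2-3)$ is a type~IV bounded symmetric domain, hence Stein, so a compact $V\subset\Omega$ meets $\Cau_v$ in a finite set, forcing $\dim V\le 1$. Alternatively, the paper observes that $\Omega$ retracts onto a twistor $\bC P^1$, so $H^{2k}(\Omega)=0$ for $k\ge 2$ and $\int_V\omega_{FS}^{\dim V}$ vanishes whenever $\dim V\ge 2$, contradicting positivity of $\omega_{FS}$. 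Both arguments bypass the singularity issue entirely. Your approach is essentially the Griffiths ``curvature of the Hodge metric'' mechanism; it is more analytic and would transport to period domains lacking a convenient Stein fibration, but in the hyperk\"ahler case the paper's arguments are cleaner and require no regularity discussion.
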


For the fibrations by complex tori a similar result holds.

\begin{thrm}Let $\fX\to B$ be a smooth fibration of compact complex manifolds with fibers $n$-dimensional complex tori and simply connected base. Then its essential dimension is no greater than $\frac{n(n-1)}2$.
\end{thrm}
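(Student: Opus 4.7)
\emph{Proof plan.} The plan is to mimic the argument of the previous (hyperk\"ahler) theorem, now applied to a period map on $H^1$. Since $B$ is simply connected the local system $R^1\pi_*\Z$ is trivial; fix a trivialization identifying each $H^1(T_b,\Z)$ with a reference lattice $\Lambda = \Z^{2n}$. The Hodge subbundle $F^1H^1 = \pi_*\Omega^1_{\fX/B}$ then becomes a holomorphic rank-$n$ subbundle of the constant bundle $\Lambda \otimes_\Z \mathcal O_B$, producing a holomorphic period map
\[
\Per\colon B \longrightarrow D,\qquad b \mapsto V_b := (F^1H^1)_b,
\]
where $D = \{V \in \Gr(n,\Lambda_{\bC}) : V \oplus \bar V = \Lambda_{\bC}\}$ is the unpolarized weight-one period domain, an open subset of the Grassmannian of complex dimension $n^2$. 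Since the Kuranishi space of a complex torus coincides (locally) with $D$, the essential dimension of $\fX \to B$ equals $\dim\Per(B)$; by Remmert's proper mapping theorem this image is a compact analytic subvariety of $D$, so it suffices to bound its dimension by $n(n-1)/2$.

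The key geometric observation is that $D$ contains a rich supply of bounded symmetric subdomains. For each real symplectic form $\omega$ on $\Lambda_\R$, the locus
\[
D_\omega = \bigl\{V \in D : \omega|_V = 0,\ i\omega(v,\bar v) > 0\ \text{for } v \in V\setminus\{0\}\bigr\}
\]
of $\omega$-polarized Hodge structures is isomorphic to the Siegel upper half space $\mathfrak{H}_n = \Sp(2n,\R)/\rU(n)$, a bounded symmetric (hence Kobayashi hyperbolic) domain of complex dimension $n(n+1)/2$ inside $D$. At any $V \in D_\omega$ the form $\omega$ identifies $\bar V$ with $V^*$, and under this identification the tangent space $T_V D = \Hom(V,\bar V)$ decomposes as $V^*\otimes V^* = \mathrm{Sym}^2 V^* \oplus \Lambda^2 V^*$, with $T_V D_\omega = \mathrm{Sym}^2 V^*$ and transverse complement $\Lambda^2 V^*$ of dimension exactly $n(n-1)/2$.

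The argument then uses Fact~1.1 (or directly the maximum principle for holomorphic maps from compact complex spaces into bounded domains) to exclude motion inside these Siegel subdomains: for every $\omega$, the preimage $B_\omega := \Per^{-1}(D_\omega)$ is a closed analytic subvariety of $B$ mapped holomorphically into the hyperbolic $D_\omega$, so $\Per|_{B_\omega}$ is constant on each connected component. Letting $\omega$ range over all polarizations of points of $\Per(B)$, one would like to conclude that at every $b$ the differential $d\Per(b)$ kills every direction tangent to some Siegel subdomain $D_\omega$ through $V_b$, so that its image is forced into the $n(n-1)/2$-dimensional antisymmetric complement. The hard part will be this last step: $\{D_\omega\}_\omega$ is not a holomorphic foliation of $D$ -- a single Hodge structure admits a whole open cone of polarizing forms, and the subspaces $T_V D_\omega$ overlap without assembling into a subbundle -- so making the transversality argument precise requires additional geometric input, analogous to the role played by twistor lines in the hyperk\"ahler theorem. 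Concretely one expects to construct explicit $n(n-1)/2$-dimensional compact subvarieties of $D$ (a sort of ``generalized twistor'' family) through which the image $\Per(B)$ must factor, yielding the desired bound.
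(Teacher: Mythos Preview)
Your setup is essentially identical to the paper's: you identify the weight-one period domain $D$ with the Teichm\"uller space $\GL^+(2n,\R)/\GL(n,\bC)$, and you exhibit the Siegel half-spaces $D_\omega \cong \mathfrak H_n$ as Stein submanifolds of complex codimension $n(n-1)/2$ passing through every point. You also correctly observe that for each $\omega$ the set $\Per(B)\cap D_\omega = \Per(B_\omega)$ is finite, being a compact analytic subvariety of a bounded (hence Stein) domain. Up to here this matches the paper exactly; the paper computes the codimension of $D_\omega$ via the 2-out-of-3 property (the transverse family $\SO(2n)/\rU(n)$ of complex structures orthogonal for a fixed Euclidean metric has complex dimension $n(n-1)/2$, and meets each $D_\omega$ in a $0$-cycle), while you compute it via the decomposition $\Hom(V,\bar V)\cong\mathrm{Sym}^2V^*\oplus\Lambda^2V^*$, but the content is the same.

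Where you stall is the final step, which is in fact elementary and needs none of the machinery you propose. Having shown $\Per(B)\cap D_\omega$ is $0$-dimensional, you are done: pick a smooth point $p$ of the compact analytic set $\Per(B)$, choose $\omega$ polarizing $p$, and use the basic codimension inequality for analytic intersections,
\[
0 \;=\; \dim_p\bigl(\Per(B)\cap D_\omega\bigr)\;\ge\;\dim\Per(B)+\dim D_\omega-\dim D\;=\;\dim\Per(B)-\tfrac{n(n-1)}{2},
\]
to conclude $\dim\Per(B)\le n(n-1)/2$. (Equivalently, $D_\omega$ is locally cut out in $D$ by the $\binom{n}{2}$ Lagrangian equations $\omega|_V=0$, so the intersection with $\Per(B)$ has dimension at least $\dim\Per(B)-\binom{n}{2}$.) One should also note that $D_\omega$ is \emph{closed} in $D$, so that the intersection is genuinely compact: a limit in $D$ of Lagrangian $V$'s with $i\omega(\cdot,\bar\cdot)>0$ is Lagrangian with $i\omega(\cdot,\bar\cdot)\ge 0$, and this Hermitian form is nondegenerate whenever $V\oplus\bar V=\Lambda_\bC$, hence positive definite. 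Your proposed detour through the image of $d\Per$, foliations by the $D_\omega$, and hypothetical ``generalized twistor'' compact subvarieties is unnecessary; indeed, as you yourself note, the $D_\omega$ do not foliate $D$, so that route would not close.
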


Here is a brief outline of the paper. In Section~2 we give some well-known facts about hyperk\"ahler manifolds, such as the global Torelli theorem. In Section~3 we describe some geometry of the moduli space of hyperk\"ahler manifolds, which is also widely known. Following those facts, we prove the main theorems and derive the Borcherds--Katzarkov--Pantev--Shepherd-Barron and Oguiso theorems. Subsequently, in Section 4 we give some other relevant observations on the fibrations with hyperk\"ahler fibers.

\section{Preliminaries}
It follows from a~straightforward calculation that a~form $$\Omega_I(u,v) = g(Ju,v) + \sqrt{-1}g(Ku,v)$$ on a~compact hyperk\"ahler manifold $(M,g,I,J,K)$ is~holomorphic with~respect~to the~complex structure $I$. 

Hyperk\"ahler manifolds with $h^{2,0}(M,I)=1$ are known as {\it simple}. For simple hyperk\"ahler manifolds the~cohomology class $[\Omega_I]$ spans the~line $H^{2,0}(M,I)\subset H^2(M,\bC)$. The famous Bogomolov decomposition theorem \cite{Bo2} states that any hyperk\"ahler manifold admits a finite cover which is a product of torus and simple hyperk\"ahler manifolds, so we are interested in the simple ones. From now onwards, until otherwise stated, while speaking of a hyperk\"ahler manifold $(M,g,I,J,K)$ as of complex manifold, we would consider the complex structure $I$. Consider a~complex family $(\fX,\cI)$ of~simple hyperk\"ahler manifolds over a~simply connected base $B$ (so that $R^2\pi_*\bC$ is a~trivial bundle), and let $X_b$ denote the~fiber over a~point $b\in B$. Then we can define the {\it period map} $B\arr{\per}\bP(H^2(X,\bC))$ which sends the~point $b$ to the line~spanned by the~class $[\Omega_{\cI|_{X_b}}]$.

If the base $B$ is not simply connected, the period map is defined as a map from the universal cover $\widetilde{B}$. The fundamental group of the base acts on the universal cover and on the period space (as the monodromy group of the local system $R^2\pi_*\bC$), and the period map is equivariant with respect to these two actions. One can try to obtain a period map from the base $B$ into the quotient of the period space by this action, but this quotient can have very poor topology. It is known to be an orbifold in the case of the polarized fibration with fiber K3 surface. However, in other cases the topology on the factor can be very non-Hausdorff, e.~g. every two non-empty subsets intersect.

\subsection{Bogomolov--Beauville--Fujiki form}
For a K3 surface $X$ the intersection form is a bilinear form on the space $H^2(X,\R)$; by the Hodge index theorem, its signature is $(3,19)$. We shall need a similar form on the second cohomology space of a hyperk\"ahler manifold, which would have the signature $(3,b_2-3)$.

\begin{thm}[F.~A.~Bogomolov, 1978 \cite{Bo1}, A.~Beauville, 1983 \cite{Be}, A.~Fujiki, 1985, 1987 \cite{F}]Let $X$ be a hyperk\"ahler manifold of real dimension $4n$. There exists a unique primitive quadratic form $q\colon H^2(X,\Z)\to\Z$ and a constant $c$ such that for any $\alpha\in H^2(X)$ one has
$$\int_X \lambda^{2n} = cq(\alpha)^n$$
and for non-zero $\sigma\in H^{2,0}(X)$ one has $q(\sigma+\overline{\sigma})>0$.
\end{thm}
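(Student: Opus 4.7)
The plan is to construct $q$ explicitly via the Bogomolov--Beauville formula, prove the Fujiki identity $\int_X\alpha^{2n}=c\,q(\alpha)^n$ by expanding in a Hodge basis, and finally settle integrality, primitivity, uniqueness, and the positivity condition. First I would fix a nonzero holomorphic symplectic form $\Omega\in H^{2,0}(X,I)$ (unique up to scalar, since $X$ is simple) and set, for $\alpha\in H^2(X,\bC)$,
\begin{equation*}
\tilde{q}(\alpha)=\tfrac{n}{2}\int_X\alpha^2(\Omega\bar\Omega)^{n-1}+(1-n)\frac{\left(\int_X\alpha\,\Omega^{n-1}\bar\Omega^n\right)\left(\int_X\alpha\,\Omega^n\bar\Omega^{n-1}\right)}{\int_X(\Omega\bar\Omega)^n}.
\end{equation*}
A direct check shows that $\tilde{q}$ is a homogeneous quadratic polynomial, is invariant under rescaling $\Omega\mapsto\mu\Omega$, vanishes on $H^{2,0}\oplus H^{0,2}$, and makes $H^{1,1}_\R$ the orthogonal complement of that plane. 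The positivity $\tilde{q}(\Omega+\bar\Omega)>0$ then reduces to $\int_X(\Omega\bar\Omega)^n>0$.

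For the Fujiki relation, I would decompose $\alpha=a\Omega+\bar a\bar\Omega+\gamma$ with $\gamma\in H^{1,1}_\R$, expand $\alpha^{2n}$ multinomially, and observe by type considerations that only monomials with equal $\Omega$- and $\bar\Omega$-degree integrate nontrivially. The surviving contributions are scalar multiples of $|a|^{2n-2k}\int_X(\Omega\bar\Omega)^{n-k}\gamma^{2k}$ for $k=0,\dots,n$. The technical core is to show that each such integral is the appropriate combinatorial multiple of $\tilde{q}(\gamma)^k\cdot\int_X(\Omega\bar\Omega)^n$; I would derive this from the Lefschetz $\mathfrak{sl}_2$-triple attached to the K\"ahler form $\omega_I$, combined with the Hodge--Riemann bilinear relations applied to the primitive decomposition of $\gamma$ along the twistor family. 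Summing and matching coefficients yields $\int_X\alpha^{2n}=c\,\tilde{q}(\alpha)^n$ with $c$ an explicit combinatorial constant depending only on $n$ and $\int_X(\Omega\bar\Omega)^n$.

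Integrality, primitivity, and uniqueness are then formal. Since $\int_X\alpha^{2n}\in\Z$ for $\alpha\in H^2(X,\Z)$ and equals $c\tilde{q}^n$, normalising $\tilde{q}$ so that one coefficient in a lattice basis equals $1$ forces $c$ to be rational, hence $\tilde{q}^n$ to be a rational polynomial; but a real quadratic form whose $n$-th power is a rational polynomial is itself rational, since among real quadratic forms the $n$-th root of a degree-$2n$ polynomial is unique up to sign. Clearing denominators and dividing by the GCD of its values on $H^2(X,\Z)$ yields a primitive integral $q$ with the appropriately adjusted constant, and the condition $q(\Omega+\bar\Omega)>0$ resolves the remaining sign ambiguity. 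Uniqueness reduces to the observation that on a space of dimension $\geq 3$ the polynomial equation $q_1^n=\lambda q_2^n$ forces $q_1$ and $q_2$ to be proportional.

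The main obstacle is the Lefschetz-theoretic step in the middle paragraph: the algebraic miracle that makes the BBF form exist is precisely that all the integrals $\int_X(\Omega\bar\Omega)^{n-k}\gamma^{2k}$ are controlled by a single scalar $\tilde{q}(\gamma)$. This is where the hyperk\"ahler hypothesis genuinely enters, either through the Verbitsky $\mathfrak{so}(4,b_2-3)$-action on total cohomology (followed by an invariant-theoretic argument identifying the top-degree $H^2$-polynomial with the $n$-th power of the quadratic invariant) or via a direct Hodge--Riemann analysis propagated along the twistor sphere; the remaining steps are either definitional or routine bookkeeping.
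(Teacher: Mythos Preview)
The paper does not prove this statement. It is presented under the theorem environment that the paper has renamed ``Fact'', with attributions to Bogomolov, Beauville, and Fujiki, followed immediately by ``For more details, see \cite{OG}'' and the explicit polarization formula (which agrees with your $\tilde q$ up to a harmless normalization). There is therefore no proof in the paper to compare your attempt against; the result is simply quoted as background.

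As for the content of your sketch: it is the standard route to the Fujiki relation, essentially what one finds in Fujiki's paper or in the surveys of Huybrechts and O'Grady. You have correctly located the only nontrivial step, namely that the integrals $\int_X(\Omega\bar\Omega)^{n-k}\gamma^{2k}$ for $\gamma\in H^{1,1}_\R$ are all controlled by a single quadratic scalar; the two mechanisms you cite (the Looijenga--Lunts--Verbitsky Lie algebra action, or a direct Hodge--Riemann argument propagated along the twistor sphere) are both legitimate, the second being closer to Fujiki's original argument. One minor correction: the LLV algebra acting on total cohomology is $\mathfrak{so}(4,b_2-2)$, not $\mathfrak{so}(4,b_2-3)$, since one adjoins a hyperbolic plane to $(H^2,q)$. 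Your integrality/uniqueness paragraph is correct in outline, though the assertion ``a real quadratic form whose $n$-th power is a rational polynomial is itself rational'' deserves one more line of justification (e.g.\ compare zero loci, or evaluate at sufficiently many lattice points).
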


For more details, see \cite{OG}.

This form is uniquely determined by this condition. One can write down an explicit formula for $q$ (here we use same letter $q$ for the polarization of the Bogomolov--Beauville--Fujiki quadratic form):
\begin{multline*}
cq(\alpha,\beta) = 2\int_X\alpha\wedge\beta\wedge\Omega_I^{n-1}\wedge\overline{\Omega_I}^{n-1} - \\ - \frac{n-1}n\frac{(\int_X\alpha\wedge\Omega_I^{n-1}\wedge\overline{\Omega_I}^n)(\int_X\beta\wedge\Omega_I^n\wedge\overline{\Omega_I}^{n-1})}{\int_X\Omega_I^n\wedge\overline{\Omega_I}^n},
\end{multline*}

where the positive constant on the left-hand side is needed for the form $q$ to be integer.

It is positive definite on the real part of the space spanned by $\Omega_I$, $\overline{\Omega_I}$ and the K\"ahler form $\omega$, and negative definite on the primitive forms (i.~e. it has signature $(3,b_2-3)$). The image of the period map lies in the set of lines spanned by the classes $\alpha$ such that $q(\alpha) = 0$ and $q(\alpha+\overline{\alpha})>0$ (or, equivalently, $q(\alpha,\overline{\alpha})>0$).

\subsection{Global Torelli theorem}
The main reference for this section is \cite{V1}.

\begin{defn}Let $(X,g)$ be a Riemannian manifold, and $\fI$ be the set of complex structures on $X$ which can be extended to hyperk\"ahler structures. The group $\Diff_0(X)$ of oriented diffeomorphisms of $X$ act on $\fI$. The {\it Teichm\"uller space} $\Teich(X)$ of $X$ is the factor $\fI/\Diff_0(X)$.
\end{defn}

The space $\Teich(X)$ admits a period map $(X,I)\mapsto H^{2,0}(X,I)\in\bP(H^2(X,\bC))$ into $\bP(H^2(X,\bC))$ in the same way as the base of any fibration with hyperk\"ahler fibers (provided that the base is simply connected). The image of the period map lies in the {\it period space} $\Per(X)=\{[\alpha]\in\bP(H^2(X,\bC))\mid q(\alpha)=0,q(\alpha,\overline{\alpha})>0\}$. Actually, $\Teich$ is not Hausdorff, but its non-Hausdorff points correspond to the bimeromorphically equivalent hyperk\"ahler manifolds. Moreover, there exists a Hausdorff space $\Teich_b$ with map $\Teich\to\Teich_b$ such that any map from $\Teich$ to a Hausdorff space factorizes through this map (so the period map $\Teich_b(X)\to\Per(X)$ is well-defined).

\begin{thm}[M.~Verbitsky, 2009 \cite{V1}]The map $\per\colon\Teich_b\to\Per$ is a diffeomorphism on each connected component of $\Teich_b$.
\end{thm}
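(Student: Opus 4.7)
The plan is to decompose the claim into three pieces: (a) \emph{local Torelli} --- that $\per$ is \'etale; (b) surjectivity of $\per$ onto each connected component of $\Per$; and (c) injectivity on $\Teich_b$. Since each connected component of $\Per$ is simply connected (the period domain fibres over the Grassmannian of positive 3-planes in $H^2(X,\R)$, both of which are simply connected), an \'etale map enjoying (b) and (c) is automatically a diffeomorphism on connected components.

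For (a), I would apply the Kuranishi theory together with Bogomolov--Tian--Todorov unobstructedness. The tangent space $T_{(X,I)}\Teich = H^1(X, T^{1,0}X)$ becomes, after contraction with the holomorphic symplectic form $\Omega_I$ (which identifies $T^{1,0}X \cong \Omega^1_X$ as sheaves via non-degeneracy), canonically isomorphic to $H^{1,1}(X,I)$; one verifies that the Hodge-theoretic tangent space $T_{[\Omega_I]}\Per$ coincides with $H^{1,1}(X,I)$ (linearising the relations $q(\alpha)=0$ and $q([\Omega_I],\cdot)=0$ modulo the line $[\Omega_I]$), and a direct check via Kodaira--Spencer shows that the differential of $\per$ realises precisely this isomorphism $\kappa\mapsto\iota_\kappa\Omega_I$. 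Unobstructedness ensures $\Teich$ is smooth of the matching dimension, so $\per$ is a local diffeomorphism.

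The central tool for (b) and (c) is the twistor family. Any compatible hyperk\"ahler metric $(g,I,J,K)$ on a fiber gives a twistor space $\Tw(X)\to\bC P^1$ and hence a holomorphic rational curve $\bC P^1\to\Teich$ (a \emph{twistor line}) whose image under $\per$ is the projectivisation of the positive 3-plane $\langle\mathrm{Re}\,\Omega_I,\mathrm{Im}\,\Omega_I,\omega\rangle\subset H^2(X,\R)$. Since the manifold of positive 3-planes in a signature $(3,b_2-3)$ space is connected, any two points of a fixed component of $\Per$ can be joined by a chain of such rational curves, and lifting one arc at a time through the \'etale map produces (b). For (c), recall that $\Teich_b$ is by construction the quotient of $\Teich$ that collapses exactly the non-separated points; invoking Huybrechts' theorem that non-separated points of $\Teich$ correspond to birationally equivalent hyperk\"ahler manifolds (which share the same $H^2$ with Hodge structure, hence the same period) reduces (c) to showing that two \emph{separated} points cannot share a period. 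I expect this last step --- upgrading ``covering of the simply connected $\Per$'' to ``diffeomorphism'' --- to be the main obstacle, resting on the nontrivial geometric fact that hyperk\"ahler birational correspondences act trivially on $H^2$ together with its Hodge structure, so that the fibres of $\per$ on $\Teich$ are precisely the equivalence classes of non-separatedness.
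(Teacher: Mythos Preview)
The paper does not give a proof of this statement at all: it is stated as a background fact with a citation to Verbitsky's original paper~\cite{V1}, and the present paper only \emph{uses} it (via the period map) in Section~3. So there is no ``paper's own proof'' to compare your proposal against.

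That said, your outline is a fair summary of the actual strategy in~\cite{V1}: local Torelli via Bogomolov--Tian--Todorov plus the identification $H^1(X,T_X)\cong H^{1,1}(X)$ through $\Omega_I$; twistor lines to move around in $\Per$; and Huybrechts' characterisation of non-separated points to pass to $\Teich_b$. Two remarks are worth making. First, your parenthetical about $\Per$ is slightly off: $\Per$ does not fibre over the Grassmannian of positive $3$-planes (that Grassmannian is odd-dimensional when the fibre is a K3); rather, as the present paper shows in Proposition~3.3, $\Per=\Gr_{++}(\R^{3,b_2-3})$ retracts onto a single twistor line $\Gr_{++}(\R^{3,0})\cong\bC P^1$, and it is this retraction that gives simple connectedness. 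Second, you correctly flag the real difficulty: an \'etale map onto a simply connected target is a diffeomorphism only once you know it is a covering, and ``\'etale'' alone does not give properness or path-lifting. In Verbitsky's argument this is handled not by general covering-space theory but by a direct lifting of twistor lines (the so-called GHK lines are generic and lift uniquely), combined with the fact that the space of hyperk\"ahler metrics is connected; your sketch gestures at this but does not supply the mechanism that replaces properness.
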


\section{Geometry of the period space}
\subsection{Positive Grassmannians}

\begin{defn}Let $V$ be a vector space over $\R$ with a non-degenerate bilinear form $\langle\cdot\mid\cdot\rangle$. We shall consider a subset in the oriented Grassmannian $\Gr_2(V)$ consisting of oriented 2-planes with positive definite restriction of the bilinear form (we shall call such planes {\it positive}). This subset is called {\it the positive Grassmannian} and denoted $\Gr_{++}(V)$.
\end{defn}

\begin{lm}[see e.~g. C.~LeBrun, 1993 \cite{LB}]The set $\Gr_{++}(V)$ is in one-to-one correspondence with the projectivization of the set of vectors $v\in V\otimes\bC$ such that $\langle v\mid v\rangle = 0$ and $\langle v\mid\overline{v}\rangle>0$ (we shall call them {\normalfont positive null-vectors}).
\end{lm}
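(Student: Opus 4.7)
The plan is to write down the correspondence explicitly in both directions and verify the routine compatibilities.

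First, I will construct a map $\Gr_{++}(V) \to \bP(\{v : \langle v | v \rangle = 0,\ \langle v | \overline{v} \rangle > 0\})$. Given a positive oriented $2$-plane $P \subset V$, the restriction of $\langle \cdot | \cdot \rangle$ to $P$ is positive definite, so $P$ carries oriented orthonormal bases. Pick one such basis $(e_1, e_2)$ and set $v = e_1 + \sqrt{-1}\, e_2 \in V \otimes \bC$. A direct computation gives
\[
\langle v | v \rangle = \langle e_1 | e_1 \rangle - \langle e_2 | e_2 \rangle + 2\sqrt{-1}\,\langle e_1 | e_2 \rangle = 0,
\qquad \langle v | \overline{v} \rangle = \langle e_1 | e_1 \rangle + \langle e_2 | e_2 \rangle = 2 > 0,
\]
so $v$ is a positive null-vector. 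To see that the projective class $[v]$ depends only on $P$ and its orientation, note that any other oriented orthonormal basis $(e_1', e_2')$ of $P$ is obtained from $(e_1, e_2)$ by a rotation in $\SO(2)$, i.~e.\ $e_1' = \cos\theta\, e_1 + \sin\theta\, e_2$, $e_2' = -\sin\theta\, e_1 + \cos\theta\, e_2$; then $e_1' + \sqrt{-1}\,e_2' = e^{-\sqrt{-1}\theta}(e_1 + \sqrt{-1}\,e_2)$, so the line $\bC\cdot v$ is unchanged.

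Next, I will construct the inverse. Given a positive null-vector $v$, decompose it into real and imaginary parts $v = x + \sqrt{-1}\,y$ with $x, y \in V$. The equation $\langle v | v\rangle = 0$ gives $\langle x | x \rangle = \langle y | y \rangle$ and $\langle x | y \rangle = 0$, while $\langle v | \overline{v} \rangle = \langle x | x \rangle + \langle y | y \rangle > 0$ forces both $x$ and $y$ to be nonzero of equal positive norm and orthogonal. In particular they span a positive $2$-plane $P_v \subset V$, which I orient by declaring $(x, y)$ to be an oriented basis. It remains to check that $P_v$ with its orientation depends only on $[v]$: replacing $v$ by $\lambda v$ with $\lambda = re^{\sqrt{-1}\theta} \in \bC^\times$ rescales both $x$ and $y$ by $r$ and then rotates the pair $(x, y)$ by $\theta$, which preserves both the plane and its orientation (note that even $\lambda = -1$, corresponding to $(x,y)\mapsto(-x,-y)$, acts by a determinant $+1$ transformation).

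Finally, I will check that the two constructions are mutual inverses. Starting from an oriented positive plane $P$ with oriented orthonormal basis $(e_1, e_2)$, I get $v = e_1 + \sqrt{-1}\,e_2$, and then decomposing into real and imaginary parts recovers exactly $(e_1, e_2)$ as an oriented orthonormal basis of $P$. Conversely, starting from a positive null-vector $v = x + \sqrt{-1}\, y$, rescaling by $1/\sqrt{\langle x | x\rangle}$ produces an oriented orthonormal basis $(e_1, e_2)$ of $P_v$ with $e_1 + \sqrt{-1}\,e_2$ a scalar multiple of $v$, so the associated line is $[v]$ again. The only subtle point in the whole argument is keeping track of orientations under complex rescaling, which is the verification carried out above.
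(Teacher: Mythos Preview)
Your proof is correct and follows essentially the same route as the paper's: both set up the bijection by associating to a positive oriented $2$-plane the null line in its complexification and, in the other direction, recovering the plane from the real and imaginary parts of a positive null-vector. The only difference is cosmetic --- you work with an explicit oriented orthonormal basis $(e_1,e_2)$ and the vector $e_1+\sqrt{-1}\,e_2$, while the paper phrases the forward map as picking one of the two conjugate null lines in $W\otimes\bC$ and uses the basis $\{w+\overline w,\ i(w-\overline w)\}$; your verification of well-definedness under $\SO(2)$ and $\bC^\times$ is more explicit than the paper's, but the content is the same.
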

\begin{proof}If $W\subseteq V$ is a point in $\Gr_{++}(V)$, then the cone of null-vectors in $W\otimes\bC$ consists of two lines, which are interchanged by the complex conjugation. If $w$ is a vector spanning one of the lines, then one needs to be $\langle w\mid\overline{w}\rangle>0$ for the inner product on $W$ to be positive definite, and vectors $w$ and $\overline{w}$ correspond to two copies of the plane $W$ coming with different orientations: if the basis $\{w+\overline{w},i(w-\overline{w})\}$ is positively oriented, then the null-vector corresponding to the plane $W$ is $w$, and $\overline{w}$ otherwise.

Conversely, if $v\in V\otimes\bC$ is such that $\langle v\mid v\rangle=0$ and $\langle v\mid\overline{v}\rangle>0$, then vectors $v+\overline{v}$ and $i(v - \overline{v})$ are both real and linearly independent, so they span a plane in $V$ with an orientation. It is easy to see that the metric on this plane is positive definite.
\end{proof}

Let $v\in V$ be a non-zero vector. Then the subset of positive 2-planes orthogonal to $v$ is exactly $\Gr_{++}(v^\perp)\subset \Gr_{++}(V)$. Indeed, the positive 2-plane $W$ is orthogonal to $v$ if and only if both $w$ and $\overline{w}$ are.

The tangent space to $\Gr_{++}(V)$ at the point $W$ is the same as to the Grassmannian, $\Hom(W,V/W)$. However, $W$ is an oriented plane with positive definite metric, so it can be regarded as a one-dimensional complex vector space, turning the tangent space into a complex one. It gives an almost complex structure on $\Gr_{++}(V)$, which is the same as the restriction of the complex structure from the complex quadric $\{\langle v\mid v\rangle=0\}\subset\bP(V\otimes\bC)$.

\begin{fact}$\Gr_{++}(\R^{2,n})$ is a Stein manifold.
\end{fact}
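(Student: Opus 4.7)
My plan is to realize $\Gr_{++}(\R^{2,n})$ as a bounded domain in $\bC^n$ and invoke the classical fact that bounded domains in complex Euclidean space are Stein.

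First I would set up a graph parameterization. Fix an orthogonal splitting $\R^{2,n} = V_+ \oplus V_-$ where $V_+$ is a positive-definite 2-plane and $V_-$ is its negative-definite $n$-dimensional complement. For any positive 2-plane $W \subset \R^{2,n}$ the intersection $W \cap V_-$ is trivial, because the form is simultaneously positive on $W$ and negative on $V_-$, so the projection $W \to V_+$ along $V_-$ is an isomorphism. Hence $W$ is the graph of a unique $\R$-linear map $A = A_W : V_+ \to V_-$, and the positivity of $W$ reads $\|A\xi\|^2 < \|\xi\|^2$ for every nonzero $\xi \in V_+$; i.e.\ the operator norm $\|A\|$ is strictly less than $1$. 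Taking the orientation on $W$ into account gives a bijection between $\Gr_{++}(\R^{2,n})$ and the open unit operator-norm ball $D \subset \Hom_\R(V_+, V_-)$. Since $\|A\|$ is controlled by the Hilbert--Schmidt norm, $D$ is a bounded subset of $\Hom_\R(V_+, V_-) \cong \R^{2n}$.

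Next I would match complex structures. The oriented Euclidean 2-plane $V_+$ is canonically a 1-dimensional complex vector space, so $\Hom_\R(V_+, V_-)$ inherits the structure of an $n$-dimensional complex vector space; and since rotation by $i$ is an isometry of $V_+$, the operator norm is invariant under this complex structure, so $D$ is a bounded complex domain in $\bC^n$. At the base point $V_+ \in \Gr_{++}(\R^{2,n})$ (corresponding to $A = 0 \in D$), the identification $T_{V_+}\Gr_{++}(\R^{2,n}) = \Hom(V_+, V/V_+) = \Hom_\R(V_+, V_-)$ from the text is tautologically $\bC$-linear. The identification at other points follows by the transitivity of the $\SO(2,n)$-action on $\Gr_{++}(\R^{2,n})$ (a consequence of Witt's extension theorem), which preserves both complex structures.

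Finally, any bounded domain $D \subset \bC^n$ is Stein: the function $\varphi(z) = \|z\|^2 - \log\mathrm{dist}(z, \partial D)$ is a strictly plurisubharmonic exhaustion (the first summand is strictly plurisubharmonic on all of $\bC^n$, the second is plurisubharmonic on any domain and tends to $+\infty$ at the boundary), so Grauert's theorem yields the Stein property.

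The main obstacle I anticipate is the global verification that the intrinsic complex structure on $\Gr_{++}(\R^{2,n})$ (defined using the oriented metric on each individual positive 2-plane) matches the one on the model $D$ (defined using only the fixed plane $V_+$). At the base point this is tautological, and the extension via $\SO(2,n)$-transitivity is clean, but an explicit coordinate verification would be technically more involved. An alternative strategy, which bypasses the bounded realization altogether, is to produce a strictly plurisubharmonic exhaustion directly on $\Gr_{++}(\R^{2,n})$: the function $\phi([v]) = -\log\bigl(\langle v\mid\overline{v}\rangle/h(v,\overline{v})\bigr)$, for any auxiliary positive-definite Hermitian form $h$ on $V \otimes \bC$, is well-defined on $\bP(V\otimes\bC)$ and exhausts $\Gr_{++}(\R^{2,n})$ at its boundary $\langle v\mid\overline{v}\rangle = 0$; strict plurisubharmonicity on the quadric $\{\langle v\mid v\rangle = 0\}$ would then follow from a signature computation exploiting that $\langle\cdot\mid\cdot\rangle$ has signature $(2,n)$.
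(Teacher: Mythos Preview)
Your overall strategy coincides with the paper's: realize $\Gr_{++}(\R^{2,n})$ as a bounded domain in $\bC^n$ and conclude Stein from that. The paper does this in one sentence, identifying $\Gr_{++}(\R^{2,n})$ with the Hermitian symmetric space $\SO(2,n)/\SO(2)\times\SO(n)$ of noncompact type and invoking Cartan's bounded-domain embedding (citing Helgason for details); you instead write down the Harish--Chandra realization by hand as the operator-norm unit ball in $\Hom_\R(V_+,V_-)\cong\bC^n$. So the route is the same, only the level of black-boxing differs.

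There is, however, a genuine gap in your holomorphicity step, and it is precisely the one you flag. Your transitivity argument is circular: you want the chart $\Phi\colon D\to\Gr_{++}$ to be holomorphic, you check $d\Phi_0$ is $\bC$-linear, and then you say the $\SO(2,n)$-action ``preserves both complex structures.'' But the $\SO(2,n)$-action on $D$ is the one transported through $\Phi$, so asking it to be holomorphic for the \emph{linear} complex structure on $D$ is exactly equivalent to asking $\Phi$ to be holomorphic. The underlying issue is that the intrinsic complex structure on $T_{W_A}\Gr_{++}$ comes from the metric and orientation of $W_A$ itself, not from the fixed $J_+$ on $V_+$, and these disagree once $A\neq 0$. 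What actually needs to be checked is that the assignment $A\mapsto[v_A]$ (the null line in $W_A\otimes\bC$) is holomorphic as a map into the quadric in $\bP(V\otimes\bC)$; this is the ``explicit coordinate verification'' you mention, and it is the substance of the type~IV bounded realization. Your fallback via a direct strictly plurisubharmonic exhaustion on the open piece of the quadric is a reasonable way to sidestep this, though the strict plurisubharmonicity of $-\log\langle v\mid\overline v\rangle$ restricted to the quadric also requires the signature computation you allude to rather than being immediate.
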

\begin{proof}
$\Gr_{++}(\R^{2,n}) = \SO(2,n)/\SO(2)\times\SO(n)$, so it is a symmetric domain of non-compact type. Due to a theorem of \'E. Cartan \cite{C} it can be holomorphically embedded into a complex space as a bounded domain.

See \cite[Theorem 7.1]{He} for a complete proof.
\end{proof}

\begin{fact}$\Gr_{++}(\R^{2,1})$ is a topological disc.
\end{fact}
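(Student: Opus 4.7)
The plan is to apply Lemma~3.2, which reduces the claim to a short computation on a rational curve. Fix an orthonormal basis of $V=\R^{2,1}$ so that the form reads $v_1^2+v_2^2-v_3^2$; the lemma then identifies $\Gr_{++}(V)$ with the locus $\{[v_1:v_2:v_3]\in\bP^2 : v_1^2+v_2^2-v_3^2=0,\ |v_1|^2+|v_2|^2-|v_3|^2>0\}$. The first equation cuts out a smooth conic $Q\subset\bP^2$, which I parametrize by the rational map $\bP^1\to Q$, $[s:t]\mapsto[2st:\,s^2-t^2:\,s^2+t^2]$, an isomorphism onto its image.

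The main step is the pullback of the positivity condition. Setting $z=s\bar t$, a direct calculation gives $|v_1|^2+|v_2|^2-|v_3|^2 = 8(\operatorname{Im} z)^2$, so the condition is exactly $s\bar t\notin\R$, equivalently $[s:t]\notin\bP^1(\R)$. The positive null locus in $\bP^1$ is therefore $\bP^1(\bC)\setminus\bP^1(\R)\cong S^2\setminus S^1$, whose connected components are open topological discs, yielding the claim.

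There is no real obstacle; the substance is the short pullback computation above, and the only subtle point is checking the bijection of Lemma~3.2 restricts to a homeomorphism onto the correct component. A more invariant alternative uses Fact~3.3: $\Gr_{++}(\R^{2,1})$ carries the structure of the Riemannian symmetric space $\operatorname{SO}(2,1)/\operatorname{SO}(2)$ — the hyperbolic plane — which is manifestly an open disc.
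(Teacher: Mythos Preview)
Your argument is correct, but it takes a different route from the paper. The paper works entirely in the real picture: it fixes an orthogonal basis $u,v,w$ with $\|u\|^2=-1$, observes that every positive $2$-plane projects isomorphically onto $\langle v,w\rangle$ along $u$, writes it as $\langle v+au,\,w+bu\rangle$, and checks directly that positive-definiteness is equivalent to $a^2+b^2<1$. This gives the open unit disc without ever complexifying, and the paper then notes that passing to the orthogonal line recovers the Cayley--Klein model of the hyperbolic plane.

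You instead invoke LeBrun's lemma (the unnumbered Lemma in Section~3, not ``Lemma~3.2'') to pass to the projective conic in $\bP^2_{\bC}$, parametrize it rationally, and pull back the positivity condition. Your computation $|v_1|^2+|v_2|^2-|v_3|^2=8(\operatorname{Im}(s\bar t))^2$ is right, and the conclusion $\bP^1(\bC)\setminus\bP^1(\R)$ is correct. The trade-off: the paper's approach is more self-contained and immediately yields the hyperbolic-plane interpretation; yours plugs directly into the complex-analytic framework used elsewhere in the paper and makes the two orientation components visible (which the paper's statement and proof are slightly informal about). Your alternative via the symmetric-space description is essentially the content of Proposition~3.1 (not 3.3) specialized to $n=1$.
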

\begin{proof}
Let $u,v,w$ be the orthogonal basis of $\R^{2,1}$ such that $\|v\|^2=\|w\|^2=1$ and $\|u\|^2=-1$. Any 2-plane $W$ with positive definite restriction of the metric projects along $u$ onto the plane $W_0=\langle v,w\rangle$ isomorphically (just because $W$ cannot contain the kernel of this projection, the line spanned by $u$). So $W = \langle v+au, w+bu \rangle$ for some real numbers $a$ and $b$, and different pairs of numbers define different planes. The restriction of metric on $W$ is positive definite iff for any real numbers $x,y$ (at least one of which is not equal to zero) one has $0 < \|x(v+au) + y(w+bu)\|^2 = x^2 + y^2 - (ax+by)^2$, which is equivalent to the condition $a^2 + b^2 < 1$.

One can also send a positive plane in $\R^{2,1}$ into its orthogonal, which is a line spanned by a negative vector, and obtain a representation of $\Gr_{++}(\R^{2,1})$ as the projectivization of the negative cone in $\R^{2,1}$, which is precisely the Cayley--Klein model for the Bolyai--Lobachevskian plane.
\end{proof}

\begin{fact}$\Gr_{++}(\R^{n,m})$ can be retracted onto $\Gr_{++}(\R^{n,0})$. In particular, $\Gr_{++}(\R^{2,n})$ is contractible.
\end{fact}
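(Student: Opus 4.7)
The plan is to exhibit an explicit deformation retraction using the orthogonal splitting $\R^{n,m}=V_+\oplus V_-$, where $V_+$ is a maximal positive definite subspace (so $V_+\cong\R^{n,0}$) and $V_-$ its orthogonal complement (so $V_-\cong\R^{0,m}$).

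First I would observe that if $W\subset\R^{n,m}$ is a positive oriented $2$-plane, then the orthogonal projection $\pi\colon\R^{n,m}\to V_+$ is injective on $W$: any vector in the kernel would lie in $V_-$, which is negative definite, and hence cannot belong to the positive plane $W$ unless it is zero. Consequently $\pi(W)$ is a $2$-plane in $V_+$, inheriting an orientation from $W$. This defines a continuous map $\rho\colon\Gr_{++}(\R^{n,m})\to\Gr_{++}(V_+)=\Gr_{++}(\R^{n,0})$, and the inclusion $\Gr_{++}(\R^{n,0})\hookrightarrow\Gr_{++}(\R^{n,m})$ (where a positive plane $W_0\subset V_+$ is viewed as sitting in the ambient $\R^{n,m}$) is clearly a section of $\rho$.

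Next I would write each positive plane $W$ as the graph of a linear map. Setting $W_0=\pi(W)$, there is a unique linear $A\colon W_0\to V_-$ with $W=\{v+Av\mid v\in W_0\}$. Positivity of $W$ translates exactly into the strict contraction condition $\|Av\|^2<\|v\|^2$ for all nonzero $v\in W_0$, where $\|\cdot\|^2$ denotes the (restricted) positive and negative definite forms on $V_+$ and $V_-$ respectively. I would then define the homotopy
$$H\colon\Gr_{++}(\R^{n,m})\times[0,1]\to\Gr_{++}(\R^{n,m}),\qquad H(W,t)=\{v+tAv\mid v\in W_0\},$$
which slides $W$ to $W_0$ along the straight line through the origin in $\Hom(W_0,V_-)$. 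The key point is that $H(W,t)$ is still positive for every $t\in[0,1]$, since $\|v+tAv\|^2=\|v\|^2-t^2\|Av\|^2\geq\|v\|^2-\|Av\|^2>0$ for $v\neq 0$; and the orientation carries over continuously via $\pi$. Thus $H$ is a well-defined deformation retraction onto $\Gr_{++}(\R^{n,0})$.

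For the particular case of signature $(2,n)$, I would then note that $\Gr_{++}(\R^{2,0})$ is the space of oriented $2$-planes in an oriented $2$-dimensional Euclidean space, which is a single point. By the first part of the theorem $\Gr_{++}(\R^{2,n})$ retracts onto a point, so it is contractible. No serious obstacle is expected; the only thing to be slightly careful about is the orientation bookkeeping in the definition of $\rho$ and in checking that $H$ is continuous in $W$ (which follows from the smooth dependence of $A$ on $W$ through the graph parametrization).
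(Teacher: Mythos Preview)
Your proof is correct, and it takes a somewhat different route from the paper's.

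The paper argues inductively on $m$: it picks a single negative line $l\subset V$, observes that projection along $l$ gives a fibration $\Gr_{++}(\R^{n,m})\to\Gr_{++}(\R^{n,m-1})$ whose fiber over $W'$ consists of positive $2$-planes in the span of $W'$ and $l$, i.e.\ $\Gr_{++}(\R^{2,1})$, and then invokes the preceding Proposition (that $\Gr_{++}(\R^{2,1})$ is a disc) to conclude that the fiber is contractible. Iterating strips off one negative direction at a time.

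Your argument instead projects onto the full positive subspace $V_+$ in one step, parametrizes each positive plane as the graph of a strict contraction $A\colon W_0\to V_-$, and linearly scales $A$ to zero. This is essentially the same graph-and-contraction idea that the paper uses in its proof that $\Gr_{++}(\R^{2,1})$ is a disc, but applied globally rather than one coordinate at a time. The payoff is that your proof is self-contained and does not rely on the previous Proposition; the paper's approach, on the other hand, exhibits an explicit tower of disc-bundle fibrations, which is a slightly finer structural statement. Both are entirely valid.
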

\begin{proof}Let $l \subset V$ be a line spanned by a negative vector. Then if $W$ is a positive 2-plane, then its projection along $l$ in $l^\perp$ is also a positive 2-plane. This defines a fibration $\Gr_{++}(V)\to\Gr_{++}(l^\perp)$, and its fiber over a plane $W' \subseteq l^\perp$ is a set of positive 2-planes in the linear hull of $W'$ and $l$, i.~e. $\Gr_{++}(\R^{2,1})$, which is contractible due to the previous Proposition.
\end{proof}

\subsection{Period space}
Now we shall study the geometry of the period space of a hyperk\"ahler manifold $X$ itself, which is identified with the space $\Gr_{++}(H^2(X,\R))$ because of LeBrun's lemma. From now onwards the letter $V$ stands for a real vector space with a metric $q$ of signature $(3,n)$.

Let $U\subset V$ be a 3-dimensional space with $q|_U$ positive definite. Then any 2-plane in it is positive, and they constitute a rational curve $\bC P^1 = \Gr_{++}(U)\subset\Gr_{++}(V)$. In the case $V=H^2(X,\R)$ there exist a natural positive 3-subspace in $V$ spanned by the K\"ahler forms $\omega_I$, $\omega_J$ and $\omega_K$, and the corresponding family is the twistorial family. Because of that we shall call such curves {\it twistorial}, and denote the rational curve consisting of 2-planes in a positive 3-subspace $U$ by $\Gr_{++}(U)$. Twistorial curves are parametrized by the manifold $\Gr_{+++}(V)$ of positive 3-subspaces in the space $V$. Unlike the positive Grassmannian of 2-planes $\Gr_{++}(V)$, the Grassmannian of positive 3-subspaces $\Gr_{+++}(V)$ do not carry a natural complex structure (for example, in the case of period space of $K3$ surfaces its real dimension equals $57$). On the other hand, the rational curves in $\Gr_{++}(V)$ are parametrized by the Hilbert scheme, which is a scheme over $\bC$, so twistorial curves can be deformed to non-twistorial. It is also clear that the normal bundles of the twistorial curves are ample. One can map a curve of any genus into twistorial curve via ramified covering and then deform it into an embedded curve (that is possible because its normal bundle is generated by global sections due to a theorem of J.~Koll\'ar \cite{Ko}). That gives examples of many families of hyperk\"ahler manifolds over curves.

Let $v\in V$ be a positive vector. Then the space $v^\perp$ has signature $(2,n)$, and $\Gr_{++}(v^\perp)$ can be identified with a contractible bounded domain in a complex vector space. On the other hand, it is a divisor in the space $\Gr_{++}(V)$. We shall call it {\it a Cauchy divisor} (because of reasons explained in Section 4) and denote as $\Cau_v$. For any 2-plane $W\in\Gr_{++}(V)$ the set of Cauchy divisors passing through $W$ is the projectivization of the positive cone in $W^\perp$. It is easy to see that twistorial curves intersect the Cauchy divisors at one point.

\begin{fact}The period space contains no compact submanifolds of dimension greater than $1$.
\end{fact}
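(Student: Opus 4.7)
The plan is a proof by contradiction using the two structural facts established above: that Cauchy divisors $\Cau_v \subset \Gr_{++}(V)$ are Stein (being isomorphic to $\Gr_{++}(\R^{2,n})$ via Fact~3.2), and that through every 2-plane $W_0 \in \Gr_{++}(V)$ there passes a large family of Cauchy divisors. Suppose for contradiction that $Z \subset \Gr_{++}(V)$ is a compact complex submanifold with $\dim_{\bC} Z \ge 2$, and fix some point $W_0 \in Z$.

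First I would produce a Cauchy divisor through $W_0$ that does not contain all of $Z$. Recall that the Cauchy divisors passing through $W_0$ are precisely the $\Cau_v$ for positive vectors $v \in W_0^\perp$, and $W_0^\perp$ has signature $(1,n)$. The key observation is that the positive cone in $W_0^\perp$ is open and therefore spans $W_0^\perp$ as a real vector space, so
\[
\bigcap_{v \in W_0^\perp \text{ positive}} \Cau_v \;=\; \{\, W \in \Gr_{++}(V) : W \perp W_0^\perp \,\} \;=\; \{W_0\}.
\]
Since $\dim Z \ge 1$, the submanifold $Z$ is not the single point $\{W_0\}$, so some positive $v \in W_0^\perp$ satisfies $Z \not\subset \Cau_v$.

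Next I would use the codimension estimate. The Cauchy divisor $\Cau_v$ is a smooth complex hypersurface in $\Gr_{++}(V)$; locally it is the zero set of a single holomorphic function. Restricting this function to $Z$ yields a non-identically-zero holomorphic function on $Z$ (because $Z \not\subset \Cau_v$), and its zero locus is $Z \cap \Cau_v$. Hence $Z \cap \Cau_v$ is a nonempty (it contains $W_0$) analytic subset of $Z$ of pure codimension one, so it is compact and of complex dimension $\ge \dim_\bC Z - 1 \ge 1$.

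Finally, this is a contradiction: $Z \cap \Cau_v$ is a positive-dimensional compact analytic subset of the Stein manifold $\Cau_v \cong \Gr_{++}(\R^{2,n})$ (Fact~3.1), and a Stein manifold admits no such subsets because any global holomorphic coordinate function would have to be constant on each compact component. I expect the only step requiring care to be the openness/spanning argument which guarantees that not every Cauchy divisor through $W_0$ contains $Z$; everything else is a direct application of the facts already proved.
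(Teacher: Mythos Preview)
Your argument is correct and follows the paper's main approach: intersect a putative compact submanifold with a Cauchy divisor, observe that the intersection is a compact positive-dimensional analytic set inside a Stein manifold, and derive a contradiction. You are in fact more careful than the paper, which simply asserts that $\Cau_v \cap X$ is a divisor on $X$ and then says ``if it is not empty''; your spanning argument for the positive cone in $W_0^\perp$ is exactly what is needed to guarantee simultaneously that the intersection is nonempty and that $Z \not\subset \Cau_v$. One small cosmetic point: it is cleanest to assume from the outset that $Z$ is connected (pass to a component), so that ``$f|_Z$ not identically zero'' unambiguously gives a pure codimension-one zero set.

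The paper also records a second, independent proof you may find worth knowing: since $\Per$ sits inside a quadric in projective space it inherits a positive $(1,1)$-form $\omega$, and because $\Per$ retracts onto a twistorial $\mathbb{C}P^1$ the class $[\omega]^k$ vanishes for $k \ge 2$; hence $\int_Z \omega^{\dim Z} = 0$, contradicting positivity when $\dim Z \ge 2$. This cohomological argument avoids Cauchy divisors entirely but uses the homotopy type of $\Per$ instead.
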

\begin{proof}If $X\subset\Per$ is a compact submanifold, then $\Cau_v\cap X$ is a divisor on $X$. If it is not empty, it is a compact submanifold of the Stein manifold $\Cau_v$, i.~e. a set of points. Therefore the dimension of $X$ needs to be equal to $1$.

One can prove this theorem the other way around: the period space $\Per$ is a subset of a quadric in a complex projective space, so it carries a positive $(1,1)$-form $\omega$. Due to the positivity one has $\int_X (\omega|_X)^{\dim X}>0$. However, the period space $\Per$ retracts onto the twistorial curve, so $\omega^{\dim X} = d\eta$ for some $(2\dim X-1)$-form $\eta$ and the integral needs to vanish unless $\dim X>1$.
\end{proof}

Now we can prove the main statement.

\begin{fact}Any family of hyperk\"ahler manifolds over a compact simply connected base has essential dimension not greater than 1.
\end{fact}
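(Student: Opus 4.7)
The plan is to reduce the claim to Fact~3.5 via the period map. Since $B$ is simply connected, the local system $R^2\pi_*\bC$ is trivial and the period map $\per\colon B\to\Per(X)\subset\bP(H^2(X,\bC))$ is globally defined and holomorphic. The first step is to identify the essential dimension of the family with $\dim_{\bC}\per(B)$. Working, as throughout Section~2, with simple hyperk\"ahler fibers, the local Torelli theorem (a corollary of Fact~2.3) asserts that the natural map from the Kuranishi space of any fiber $X_b$ to $\Per(X)$ is a local biholomorphism. Hence the local factorization $B\to\mathrm{Kur}(X_b)\to\Per(X)$ shows that the image of $B$ in $\mathrm{Kur}(X_b)$ and its image in $\Per(X)$ have the same dimension in a neighbourhood of $b$; taking the supremum over $b$, the essential dimension coincides with $\dim_{\bC}\per(B)$.

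Next, because $B$ is compact and $\per$ is holomorphic, Remmert's proper mapping theorem shows that $\per(B)\subset\Per(X)$ is a compact analytic subset. To finish, I would re-run the argument of Fact~3.5: pick a positive vector $v\in H^2(X,\R)$ with $\Cau_v\cap\per(B)\ne\emptyset$, which is possible because the Cauchy divisors sweep out $\Per(X)$ (for any positive 2-plane $W$ the orthogonal $W^\perp$ has signature $(1,n{-}2)$ and hence contains positive vectors). If $\dim_{\bC}\per(B)\geq 2$, then $\Cau_v\cap\per(B)$ would be a nonempty compact analytic subset of positive dimension inside the Stein manifold $\Cau_v$, contradicting the classical fact that Stein manifolds carry no positive-dimensional compact analytic subvarieties. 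Therefore $\dim_{\bC}\per(B)\leq 1$, and the theorem follows.

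The only genuinely nontrivial step is the identification of the essential dimension with $\dim_{\bC}\per(B)$, which rests on the local Torelli theorem packaged inside Fact~2.3. Once that identification is granted, the rest of the argument is a one-line appeal to Remmert's theorem followed by a direct adaptation of Fact~3.5 to compact analytic (rather than smooth) subsets; no new ingredient is needed, since the Stein-divisor argument used in Fact~3.5 does not use smoothness of $\per(B)$. The alternative proof of Fact~3.5 via positivity of a $(1,1)$-form on $\Per$ could also be pushed through by resolving $\per(B)$, but the Cauchy-divisor route is strictly simpler.
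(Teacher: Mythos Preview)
Your proof is correct and follows essentially the same route as the paper, whose argument is simply the one-line observation that $\per(B)$ is compact in $\Per$ and hence, by Fact~3.5, has dimension at most~$1$. You have carefully filled in details the paper leaves implicit---the local Torelli identification of the essential dimension with $\dim_{\bC}\per(B)$, Remmert's theorem, and the remark that the Stein-divisor argument works for compact analytic (not just smooth) images---but the underlying strategy is identical.
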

\begin{proof}The image of the period map $\per(B)$ is a compact submanifold in the period space $\Per$, thus a curve or a point.
\end{proof}

\subsection{Fibrations by complex tori}
In a way analogous to the stated above one can try to make a similar statement for the fibrations by $n$-dimensional complex tori. However, families of tori over a base of dimension greater that $1$ do exist, some of them were described in \cite{V2}. The Teichm\"uller space of the $n$-dimensional complex torus is the space of linear complex structures on the real vector space $V$ of dimension $2n$, hence isomorphic to $\GL^+(2n,\R)/\GL(n,\bC)$ (in particular, its real dimension is $4n^2-2n^2 = 2n^2$). It contains plenty of complex submanifolds~-- namely, for every Eucledian metric $g$ on $V$ one can define the submanifold consisting of complex structures orthogonal with respect to $g$. These submanifolds are the symmetric spaces $\SO(2n)/\rU(n)$, and the complex structure on these symmetric spaces coincides with the restriction of the complex structure on $\GL^+(2n,\R)/\GL(n,\bC)$. The real dimension of these submanifolds is equal to $\frac{2n(2n-1)}2 - n^2 = n^2 - n$.

On the other hand, any symplectic form $\psi$ on the vector space $V$ determines the submanifold $\Cau_\psi$ in the space of complex structures on it as follows: the subvariety $\Cau_\psi \subset \GL^+(2n,\R)/\GL(n,\bC)$ is the set of complex structures $I$ for which the form $\psi_I(u,v)=\psi(u,Iv)$ is symmetric and positive definite. A well-known 2-out-of-3 property of the unitary group asserts that a symplectic form and a Eucledian metric on a vector space uniquely determine the complex structure on it, in other words, the submanifold of complex structures orthogonal with respect to fixed metric intersects the submanifold $\Cau_\psi$ by $0$-cycle, thence $\dim_{\bC}\Cau_\omega=n^2 - \frac{n^2-n}2 = \frac{n^2+n}2$. Obviously, such submanifolds pass through each point in the Teichm\"uller space. But these submanifolds are biholomorphic to the Siegel upper-half space, in particular, they are Stein \cite[Ch.~2]{M}. Therefore the Teichm\"uller space of an $n$-dimensional complex torus cannot contain compact submanifolds of dimension greater than $\frac{n^2-n}2$.

So, we have proved the following
\begin{fact}Any family of $n$-dimensional complex tori over a compact simply connected base has essential dimension no greater than $\frac{n^2-n}2$.
\end{fact}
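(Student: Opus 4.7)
The plan is to mimic the proof of Fact~3.5, replacing the period space of hyperk\"ahler manifolds with the Teichm\"uller space $\Teich = \GL^+(2n,\R)/\GL(n,\bC)$ of $n$-dimensional complex tori and the Cauchy divisors $\Cau_v$ with the higher-codimensional Cauchy submanifolds $\Cau_\psi$ just introduced. First, I would construct a period map $\per\colon B \to \Teich$: simple connectedness of $B$ forces the local system $R^1\pi_*\Z$ to be trivial, so a canonical trivialization identifies $H^1(X_b,\R)$ with a fixed real vector space $V\cong\R^{2n}$ for all $b\in B$, and the holomorphically varying Hodge line $H^{1,0}(X_b)\subset V\otimes\bC$ cuts out a linear complex structure on $V$. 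Since the Kuranishi space at $(V/\Lambda,I)$ is unobstructed and canonically a neighbourhood of $I$ in $\Teich$, the essential dimension of $\fX\to B$ equals $\dim_\bC\per(B)$; by Remmert's proper mapping theorem $X:=\per(B)$ is then a compact complex analytic subset of $\Teich$.

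The core step is the Cauchy-submanifold argument. For an arbitrary point $p\in X$, viewed as a complex structure $I$ on $V$, I would pick any $I$-compatible Hermitian inner product on $V$ and let $\psi$ be its imaginary part; this is a symplectic form satisfying $p\in\Cau_\psi$, confirming that such a submanifold passes through every point of $\Teich$. By the codimension count in the text $\Cau_\psi$ has complex codimension $n^2 - \frac{n(n+1)}{2} = \frac{n(n-1)}{2}$ in $\Teich$ and is Stein (biholomorphic to the Siegel upper half-space). If $\dim_\bC X = d > \frac{n(n-1)}{2}$, then the standard lower bound on dimensions of components of analytic intersections forces every component of $X\cap\Cau_\psi$ through $p$ to have dimension at least $d - \frac{n(n-1)}{2} \geq 1$. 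But $X\cap\Cau_\psi$ is a compact analytic subset of the Stein manifold $\Cau_\psi$, hence is $0$-dimensional~--- contradiction. Therefore $d\leq\frac{n(n-1)}{2}$.

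I do not foresee a serious obstacle. The nontrivial inputs~--- existence of a Cauchy submanifold through any prescribed point (the $2$-out-of-$3$ observation), properness of the period map, and the fact that compact analytic subvarieties of Stein spaces are finite~--- are either already singled out in the paragraph preceding the statement or are entirely standard. The statement is really a formal consequence of the geometric framework just built up, directly parallel to the hyperk\"ahler case of Fact~3.5, but with the role of the twistor curve played by a Stein submanifold of positive codimension $\frac{n(n-1)}{2}$ rather than by a divisor.
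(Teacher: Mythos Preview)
Your proposal is correct and follows essentially the same route as the paper, whose proof is the paragraph immediately preceding the statement: pass a Stein Cauchy submanifold $\Cau_\psi$ of codimension $\frac{n(n-1)}{2}$ through an arbitrary point and use that a compact analytic subset of a Stein space is zero-dimensional. You have merely made explicit the period-map construction and the intersection-dimension estimate that the paper leaves implicit; the only slip is in your closing analogy---$\Cau_\psi$ plays the role of the Cauchy divisor $\Cau_v$, not of the twistor curve.
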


\subsection{Borcherds---Katzarkov---Pantev---Shepherd-Barron and Oguiso theorems}
For an integral vector $v\in H^2(X,\Z)$ the divisor $\Cau_v$ consists of complex structures $I$ for which one has $v\in H^{1,1}(X,I)$ (i.~e., the corresponding 2-plane in $H^2(X,\R)$ lies in the orthogonal to $v$). The subset $S_Z$ in the projectivization $\bP T^*{\Per}$ consisting of hyperplanes tangent to the integral Cauchy divisors is dense in an open subset of $\bP T^*{\Per}$ consisting of tangent hyperplanes to all Cauchy divisors $S$: hyperplanes tangent to Cauchy divisors are orthogonal to positive vectors, and lines spanned by positive integral vectors are dense in the positive cone.

Now, for any holomorphic disc $\Delta$ we can consider a point $p\in\Delta$ together with a Cauchy divisor $D$ passing through $p$. The tangent hyperplane $t=T_pD\subset T_p{\Per}$ is a point in $S$, and we can pick up a point $t'\in S_Z$ arbitrarily close to $t$. The divisor corresponding to $t'$ would be an integral Cauchy divisor intersecting $\Delta$. By shrinking $\Delta$, we can show that union of all integral Cauchy divisors interset any holomorphic disc in a dense subset. On the other hand, this union is the set where the Picard rank jumps. This proves Fact 1.4 (and thence the Fact 1.2).

\section{Appendix: Lorentzian K\"ahler metric on the period space}
In the present section, we shall also deal with the fibrations by hyperk\"ahler manifolds over noncompact or non-simply connected base. All fibrations are assumed to be such that the corresponding period map is an immersion (so that the universal covers of their bases are subvarieties in the period space $\Per$).

\begin{defn}Let $(X,g,I)$ be a complex manifold equipped with a metric of signature $(+,+,-,-,\dots)$. If the 2-form $\omega(u,v)=g(Iu,v)$ is closed, then we shall call such manifold a {\it Lorentzian K\"ahler manifold}.
\end{defn}

Due to the LeBrun's lemma stated in the Section 3, the period space $\Per$ is in fact the positive Grassmannian $\Gr_{++}(\R^{3,b_2-3})$. Its tangent space at the point $W\subset V$ is equal to $\Hom(W,W^\perp)$ and carries a natural (and hence $\SO(3,b_2-3)$-invariant) metric of indefinite signature $(2,2b_2-6)$. We shall denote in by $g_{\Per}$ and put $\omega_{\Per}(u,v)=g_{\Per}(Iu,v)$. This is a non-degenerate 2-form.

\begin{fact}$d\omega_{\Per}=0.$
\end{fact}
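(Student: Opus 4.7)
The plan is to realize $\omega_{\Per}$ as $i\partial\bar\partial$ of a globally defined real function on $\Gr_{++}(V)$. By LeBrun's lemma $\Gr_{++}(V)$ is the open subset $\{\langle v\mid\bar v\rangle>0\}$ of the complex quadric $Q=\{\langle v\mid v\rangle=0\}\subset\bP(V\otimes\bC)$, and its complex structure is induced from $Q$. Extending $\langle\cdot\mid\cdot\rangle$ complex-bilinearly to $V\otimes\bC$, the function $\log\langle v\mid\bar v\rangle$ is defined on the cone over $\{[v]:\langle v\mid\bar v\rangle>0\}$ and shifts under $v\mapsto\lambda v$ by $\log|\lambda|^{2}=\log\lambda+\log\bar\lambda$, which is pluriharmonic; hence
\[
\omega_{0}:=i\,\partial\bar\partial\log\langle v\mid\bar v\rangle
\]
descends to a closed $(1,1)$-form on $\{[v]:\langle v\mid\bar v\rangle>0\}\subset\bP(V\otimes\bC)$, and its restriction to $\Gr_{++}(V)$ is manifestly closed.

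The second step is to check that $\omega_{0}$ coincides (up to a harmless positive constant) with $\omega_{\Per}$. Fix $W\in\Gr_{++}(V)$ with orthonormal basis $\{e_{1},e_{2}\}$ and an orthogonal basis $\{f_{0},\dots,f_{n}\}$ of $W^{\perp}$ with $q(f_{0},f_{0})=1$ and $q(f_{i},f_{i})=-1$ for $i\geq 1$, so $v_{0}=e_{1}+ie_{2}$ represents $W$. Parametrizing $Q$ near $[v_{0}]$ by $v=v_{0}+\sum_{a}z^{a}f_{a}+O(|z|^{2})$ and using that $v_{0}$ is orthogonal to $W^{\perp}\otimes\bC$, one computes $\langle v\mid\bar v\rangle=2+\sum_{a,b}q(f_{a},f_{b})z^{a}\bar z^{b}+O(|z|^{3})$, so
\[
\omega_{0}|_{[v_{0}]}=\tfrac{i}{2}\sum_{a,b}q(f_{a},f_{b})\,dz^{a}\wedge d\bar z^{b}.
\]
Under the $\bC$-linear identification $\Hom(W,W^{\perp})\ni\phi\mapsto\phi(e_{1})+i\phi(e_{2})\in W^{\perp}\otimes\bC=T^{1,0}_{[v_{0}]}Q$, the natural pairing $g_{\Per}(\phi,\psi)=q(\phi(e_{1}),\psi(e_{1}))+q(\phi(e_{2}),\psi(e_{2}))$ takes the form $g_{\Per}(u,w)=\operatorname{Re}\langle u\mid\bar w\rangle$ and $\omega_{\Per}(u,w)=g_{\Per}(Iu,w)$ becomes $-\operatorname{Im}\langle u\mid\bar w\rangle$; testing both $\omega_{0}$ and $\omega_{\Per}$ on pairs $(f_{a},if_{b})$ produces $q(f_{a},f_{b})$ in each case, so $\omega_{0}=\omega_{\Per}$ and $d\omega_{\Per}=0$.

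The main obstacle is bookkeeping in the second step: the factors of two, sign conventions and complex-structure identifications entering the K\"ahler potential must be tracked consistently. A conceptual alternative bypassing all of this is the symmetric-space argument: $\Gr_{++}(V)=\SO(V)/(\SO(W)\times\SO(W^{\perp}))$ with involution fixing $W$ and negating $W^{\perp}$ is a pseudo-Riemannian symmetric space, so the reductive decomposition $\mathfrak{so}(V)=\mathfrak{k}\oplus\mathfrak{m}$ satisfies $[\mathfrak{m},\mathfrak{m}]\subset\mathfrak{k}$. Both $g_{\Per}$ and $I$ are $\SO(V)$-invariant by construction, hence so is $\omega_{\Per}$, and for any invariant $2$-form $\alpha$ on a symmetric space the Koszul formula gives $d\alpha(X,Y,Z)=-\alpha([X,Y]_{\mathfrak{m}},Z)+\text{cyclic}=0$ because $[\mathfrak{m},\mathfrak{m}]_{\mathfrak{m}}=0$.
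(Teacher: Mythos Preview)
Your proposal is correct, and both routes you offer are sound, but the paper's own argument is different and shorter than either. The paper simply observes that the isotropy subgroup $\SO(W)\times\SO(W^{\perp})$ contains the element $(-\Id_{W},\Id_{W^{\perp}})$ (rotation by $\pi$ in $\SO(2)$), which acts on $T_{W}\Per=\Hom(W,W^{\perp})$ as $-\Id$; since $d\omega_{\Per}$ is $\SO(V)$-invariant it must be fixed by this element, yet as a $3$-form it changes sign, so it vanishes identically. Your second argument is the general ``invariant forms on symmetric spaces are closed'' principle, of which the paper's trick can be viewed as a quick shortcut that in fact kills \emph{all} odd-degree invariant forms, not merely $d$ of a $2$-form. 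Your first argument, via the potential $\log\langle v\mid\bar v\rangle$, is more laborious---and you are right that the sign and factor bookkeeping in matching $\omega_{0}$ to $\omega_{\Per}$ is delicate (indeed with your conventions the map $\phi\mapsto\phi(e_{1})+i\phi(e_{2})$ is $\bC$-\emph{antilinear}, so the identification needs a conjugate)---but it buys you an explicit global formula for $\omega_{\Per}$ as the curvature of a line bundle, which none of the symmetry arguments provide. If you only want closedness, you can sidestep the bookkeeping entirely: both $\omega_{0}$ and $\omega_{\Per}$ are nonzero $\SO(V)$-invariant $2$-forms, and (as the paper later shows in its Proposition~4.3, or as follows from Schur's lemma applied to $\Lambda^{2}\Hom(W,W^{\perp})$) such a form is unique up to scale, so $\omega_{\Per}$ is a constant multiple of the manifestly closed $\omega_{0}$.
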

\begin{proof}Stabilizer of a point $W$ is a subgroup $\SO(W)\times\SO(W^\perp)\subset\SO(3,b_2-3)$. The group $\SO(W)=\SO(2)$ contains an operator $-\Id$, so the $\SO(3,b_2-3)$-invariant form $d\omega_{\Per}$ would be invariant under the fiberwise operator $-\Id$. However, it is a 3-form and thence vanishes.
\end{proof}

Nevertheless, this form is not exact because of the following reason:

\begin{fact}Let $U\subset V=\R^{3,b_2-3}$ be a 3-subspace with positive definite metric, and $\Gr_{++}(U)\subset\Gr_{++}(V)$ be the corresponding twistorial curve. Then the restriction $\omega_{\Per}|_{\Gr_{++}(U)}$ is the Fubini--Study form.
\end{fact}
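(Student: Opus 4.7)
The strategy is to reduce to a homogeneity argument. First I would identify the tangent spaces along the twistorial curve: at a point $W \in \Gr_{++}(U)$ the tangent space to $\Gr_{++}(V)$ is $\Hom(W, W^\perp)$, and the tangent space to the submanifold $\Gr_{++}(U)$ is the codimension-one subspace $\Hom(W, W^\perp \cap U)$. Since $U$ is a positive-definite 3-subspace containing $W$, the orthogonal complement $W^\perp \cap U$ is a one-dimensional positive line, so $\Hom(W, W^\perp \cap U)$ is real two-dimensional and inherits a positive-definite piece of $g_{\Per}$. Moreover, the complex structure $I$ on $\Hom(W, W^\perp)$ (coming from treating $W$ as a complex line via its orientation and metric) preserves this subspace, so it gives a genuine complex structure on $\Gr_{++}(U)$, and the restriction $\omega_{\Per}|_{\Gr_{++}(U)}$ is a positive $(1,1)$-form.

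Next I would exploit $\SO(3)$-invariance. The subgroup $\SO(U) = \SO(3) \subset \SO(3, b_2-3)$ preserves $U$ and hence preserves the twistorial curve, acting on it transitively: via the Lemma of LeBrun (or directly, by sending an oriented 2-plane to its oriented normal in $U$), $\Gr_{++}(U)$ is identified with $S^2 \cong \bC P^1$, and the action of $\SO(3)$ is the standard rotational action. Since $\omega_{\Per}$ is $\SO(3, b_2-3)$-invariant by construction, its restriction is an $\SO(3)$-invariant, closed, positive $(1,1)$-form on $\bC P^1$. Any such form is a positive scalar multiple of the Fubini--Study form, since the space of $\SO(3)$-invariant real 2-forms on $S^2$ is one-dimensional, spanned by the round volume form.

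The remaining task is merely to check that the normalization agrees (or, if one reads the statement up to scale, this step is vacuous). I would fix a point $W \in \Gr_{++}(U)$, pick an orthonormal basis adapted to the splitting $V = W \oplus (W^\perp \cap U) \oplus (U^\perp)$, and compute $\omega_{\Per}$ on the single complex tangent direction in $\Hom(W, W^\perp \cap U)$; this amounts to evaluating $g_{\Per}(I\xi, \xi)$ on a unit vector $\xi$, which gives the standard Fubini--Study normalization by the definition of $g_{\Per}$ as the metric induced from $q$ on both factors of $\Hom$.

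The only mildly delicate step is the invariance setup --- making sure that $I$ on $\Gr_{++}(V)$ restricts to the standard complex structure on the twistorial $\bC P^1$ rather than a conjugate one. This is handled by the description in the proof of LeBrun's lemma: the orientation on $W$ determines the holomorphic tangent line, and this orientation is precisely the one used in the parametrization of $\Gr_{++}(U)$ by $S^2 \subset U$. Once this is checked, the rest is forced by uniqueness of the invariant form.
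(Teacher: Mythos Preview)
Your argument is correct, but it takes a different route from the paper's. The paper's proof is a one-line naturality observation: the construction of $\omega_{\Per}$ from $(V,q)$ is functorial in isometric inclusions of quadratic spaces, so for $U\hookrightarrow V$ the restriction $\omega_{\Per}|_{\Gr_{++}(U)}$ is simply the form obtained by running the same construction on $(U,q|_U)\cong\R^{3,0}$; and on $\Gr_{++}(\R^{3,0})=\bC P^1$ that construction yields the Fubini--Study form essentially by definition. Your approach instead fixes the ambient $\Gr_{++}(V)$, restricts, and then pins down the result by $\SO(3)$-invariance and the uniqueness of the invariant $2$-form on $S^2$, followed by a normalization check. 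The naturality argument is slicker and explains \emph{why} no computation is needed, but it leans on the phrase ``may be regarded as its definition,'' which your explicit invariance-plus-normalization step actually justifies. Your identification of the tangent subspace $\Hom(W,W^\perp\cap U)$ and the verification that the complex structure on $W$ preserves it are exactly the content hidden in the paper's word ``compatible.''
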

\begin{proof}Actually, one can naturally associate such a form to any positive Grassmannian $\Gr_{++}$, and it would be compatible to its inclusions obtained from ones of vector spaces. The fact that this form on the $\bC P^1 = \Gr_{++}(\R^{3,0})$ is its Fubini--Study form may be regarded as its definition.
\end{proof}

As $\Gr_{++}(\R^{3,n})$ retracts onto any twistorial curve, its second cohomology is one-dimensional, and it is spanned by the cohomology class $[\omega_{\Per}]$. Moreover, one holds the following

\begin{fact}Any $\SO(3,b_2-3)$-invariant 2-form on the period space $\Per$ is a multiple of the form $\omega_{\Per}$.
\end{fact}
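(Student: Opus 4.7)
The plan is to reduce the statement, via homogeneity, to an elementary computation in the representation theory of the isotropy group. Since $\Per = \SO(3, b_2-3)/(\SO(W) \times \SO(W^\perp))$ is a homogeneous space, any $\SO(3, b_2-3)$-invariant $2$-form on $\Per$ is determined by its value at a fixed basepoint $W$, and that value must be an element of $\bigwedge^{2}(T_W \Per)^*$ fixed by the isotropy $H = \SO(W) \times \SO(W^\perp) \cong \SO(2) \times \SO(1, b_2-3)$. Hence it suffices to show that the space of such invariants is one-dimensional.

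First I would identify $T_W \Per = \Hom(W, W^\perp)$ as the external tensor product of the defining representation of $\SO(W)$ on $W$ and the defining representation of $\SO(W^\perp)$ on $W^\perp$, and apply the standard decomposition
\[
\bigwedge{}^{2}\Hom(W,W^\perp)^{*} \;\cong\; \bigl(S^{2} W^{*} \otimes \bigwedge{}^{2}(W^\perp)^{*}\bigr) \;\oplus\; \bigl(\bigwedge{}^{2}W^{*} \otimes S^{2}(W^\perp)^{*}\bigr).
\]
Then I would count $H$-invariants summand by summand. On the $\SO(W)$ side, $S^{2} W^{*}$ has a one-dimensional invariant line (spanned by the metric on $W$), and $\bigwedge^{2}W^{*}$ is one-dimensional and trivially invariant (the area form). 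On the $\SO(W^\perp)$ side, $S^{2}(W^\perp)^{*}$ has a one-dimensional invariant line (the metric on $W^\perp$), while $\bigwedge^{2}(W^\perp)^{*}$ is canonically identified, via the metric, with $\mathfrak{so}(W^\perp)$ in the adjoint representation. Since $\dim W^\perp = b_2 - 2 \geq 5$ for every simple hyperk\"ahler manifold, $\mathfrak{so}(1, b_2-3)$ is simple, and the adjoint representation admits no invariant vectors. The first summand therefore contributes nothing, and the second contributes a single line, namely the span of the tensor product of the area form on $W$ with the metric on $W^\perp$.

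To conclude, $\omega_{\Per}$ is $\SO(3,b_2-3)$-invariant by construction and nonzero because its restriction to any twistorial curve is the Fubini--Study form (Fact 4.3), so it spans the one-dimensional space of $H$-invariant $2$-forms on $T_W\Per$, proving the claim. The only point that needs real care is the vanishing of $\SO(W^\perp)$-invariants in $\bigwedge^{2}(W^\perp)^{*}$; this is a classical consequence of simplicity of $\mathfrak{so}(1, b_2-3)$, and the hyperk\"ahler lower bound on $b_2$ safely excludes the low-dimensional exceptions $\mathfrak{so}(1,0)$ and $\mathfrak{so}(1,1)$ for which the statement would genuinely fail.
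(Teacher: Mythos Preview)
Your proof is correct and shares the paper's overall strategy: reduce via homogeneity to computing isotropy invariants in $\bigwedge^2(T_W\Per)^*$. The difference lies only in how the invariants are counted. The paper observes that $\Hom(W,W^\perp)=W\otimes(W^\perp)^*$ is an irreducible real $\SO(2)\times\SO(1,b_2-3)$-module and invokes Schur's lemma to bound the invariant bilinear forms; you instead use the standard decomposition $\bigwedge^2(A\otimes B)^*\cong(S^2A^*\otimes\bigwedge^2B^*)\oplus(\bigwedge^2A^*\otimes S^2B^*)$ and count invariants factor by factor. Your route is a bit more explicit and isolates the antisymmetric part directly; the paper's Schur argument is more conceptual but, taken literally over $\R$, actually yields a two-dimensional space of invariant bilinear forms (since the endomorphism algebra of this complex-type real irreducible is $\bC$), which then splits into one symmetric line ($g_{\Per}$) and one antisymmetric line ($\omega_{\Per}$). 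One minor numerical slip: the threshold you need for $\mathfrak{so}(1,b_2-3)$ to be simple is $b_2-3\ge 2$, i.e.\ $\dim W^\perp\ge 3$, not $\ge 5$; your identification of the exceptional cases $\mathfrak{so}(1,0)$ and $\mathfrak{so}(1,1)$ is nonetheless the right one.
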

\begin{proof}Invariant tensors on homogenuous spaces are determined by its value at one point, which needs to be an invariant tensor in the tangent space under the action of the stabilizer of this point. So, one needs to check that the form $g_{\Per}|_W$ is a unique $\SO(2)\times\SO(1,b_2-3)$-invariant form on the space $\Hom(W,W^\perp)$. This 2-form defines a representation homomorphism $\Hom(W,W^\perp)\to\Hom(W^\perp,W)$. But $\Hom(W,W^\perp) = W \otimes (W^\perp)^*$. The space $W$ carries naturally the irreducible representation of $\SO(2)$, and $(W^\perp)^*$ is an irreducible representation of $\SO(1,b_2-3)$, so $\Hom(W,W^\perp)$ is an irreducible representation of $\SO(2)\times\SO(1,b_2-3)$. Schur's lemma implies that the space of homomorphisms between $\Hom(W,W^\perp)$ and its dual is one-dimensional. This proves the Proposition.
\end{proof}

Because of invariance of the form $\omega_{\Per}$ on the period space, its pullback $\per^*(\omega_{\Per})\in\Omega^2(\widetilde{B})$ is invariant under the $\pi(B)$-action, so it descends to the base and thus defines an invariant of a family $\fX\to B$ of hyperk\"ahler manifolds in the space of 2-forms on $B$. We shall denote the 2-form on $B$ obtained via this construction as $\omega_{\fX}\in\Omega^2(B)$.

%\begin{fact}The 2-form $\omega_{\fX}$ corresponding to a family $\fX$ is non-degenerate.
%\end{fact}
%\begin{proof}The form $\omega_{\fX}$ is closed, so the distribution of its kernels is integrable. As the form $\omega_{\Per}$ is non-degenerate, the leaves of this distribution map into points  in the period space. If these leaves are not 0-dimensional (i.~e., the form $\omega_{\fX}$ is degenerate), then the period map cannot be an immersion.
%\end{proof}

%Due to the Proposition 8, if $\fX\to\bC P^1$ is the twistorial family, then $\omega_{\fX}$ is the Fubini--Study form on $\bC P^1$. One may hope that this form $\omega_{\fX}$ is the proper substitute for the Fubini--Study form for some families other than twistorial in the statements like \cite[Proposition 8.15]{KV}.

%\begin{fact}Base of any family $\fX$ of hyperk\"ahler manifolds carries either a K\"ahler or a Lorentzian K\"ahler structure.
%\end{fact}
%\begin{proof}
%The 2-form $\omega_{\fX}$ respects the complex structure on the base because of naturality of its construction. The corresponding pseudo-Riemannian metric has the positive index of inertia either two or zero. If latter, then $-\omega_{\fX}$ is a K\"ahler form. If former, $\omega_{\fX}$ is a Lorentzian K\"ahler form.
%\end{proof}

One can find some similarities between the Lorentzian K\"ahler geometry of the positive Grassmannian $\Gr_{++}(\R^{3,n})$ and geometry of usual Lorentzian manifolds. For example, twistorial curves resemble timelike geodesics, whilst the divisors $\Cau_v$ are similar to Cauchy hypersurfaces. Nevertheless, this similarity is not complete: for example, while on a Lorentzian manifold $X$ containing a Cauchy hypersurface $M$ a bunch of timelike geodesics startled orthogonally from $M$ defines a decomposition $X=M\times\R$ \cite[Section 3]{ChN}, twistorial curves orthogonal to a Cauchy divisor $\Cau_v$ massively intersect, forming somewhat like a Lefschetz pencil.

\paragraph*{Acknowledgements.} I am grateful to M.~Verbitsky for turning my mind to the problem, fruitful discussions and help with preparation of the manuscript, and to D.~Kaledin and the anonymous referee of this paper on the M\"obius Contest for pointing to serious flaws.

This work was supported in part by the M\"obius Contest Foundation for Young Scientists.

{\sc \noindent Laboratory of Algebraic Geometry, \\
National Research University Higher School of Economics, \\
7 Vavilova Str., Moscow, Russia, 117312\\
e-mail: {\tt deevrod@mccme.ru}}

\end{document}